\numberwithin{equation}{section}
\theoremstyle{plain}
\newtheorem{theorem}{Theorem}[section]
\newtheorem{proposition}[theorem]{Proposition}
\newtheorem{lemma}[theorem]{Lemma}
\theoremstyle{definition}
\newtheorem{definition}{Definition}[section]
\newtheorem{remark} [theorem] {Remark}
\newtheorem{condition}{Condition}[section]
\newcommand{\C}{\mathbb{C}}
\newcommand{\Z}{\mathbb{Z}}
\newcommand{\Q}{\mathbb{Q}}
\newcommand{\A}{\mathbb{A}}
\newcommand{\PP}{\mathbb{P}}
\newcommand{\Proj}{\mathbb{P}}
\DeclareMathOperator{\Pf}{Pf}
\DeclareMathOperator{\codim}{codim}
\DeclareMathOperator{\Imago}{Im}
\DeclareMathOperator{\rk}{rk}
\DeclareMathOperator{\Fix}{Fix}
\DeclareMathOperator{\Cl}{Cl}
\DeclareMathOperator{\Spec}{Spec}
\DeclareMathOperator{\proj}{Proj}
\DeclareMathOperator{\wts}{wts}
\DeclareMathOperator{\wt}{wt}
\DeclareMathOperator{\Bs}{Bs}
\begin{document}
\makeatletter
\@namedef{subjclassname@2020}{%
	\textup{2020} Mathematics Subject Classification}
\makeatother

\title{Fano 3-folds and double covers by half elephants}

\author{Livia Campo}

\address{School of Mathematics \\
	KIAS \\
	85 Hoegiro, Dongdaemun-gu \\
	Seoul, 02455 \\
	Republic of Korea}

\email{liviacampo@kias.re.kr}

\keywords{Fano 3-fold, Fano index, Equivariant unprojection}

\thanks{The author would like to thank Gavin Brown, Tiago Guerreiro, Stavros Argyrios Papadakis, Kaori Suzuki, and Miles Reid for conversations and comments during the development of this work. The author was supported by EPSRC Doctoral Training Partnership, by EPSRC grant EP/N022513/ held by Alexander Kasprzyk, and by the Korea Institute for Advanced Study (KIAS), grant No. MG087901.}

\begin{abstract} 
	We construct a deformation family for each of the 34 Hilbert series of Fano 3-folds in codimension 4 having Fano index 2. In 18 cases we construct two different families, distinguished by the topology of their general members. 
\end{abstract}

\maketitle

\section{Introduction}

We work over the field of complex numbers $\C$. A $\Q$-\textit{Fano 3-fold} $X$ is a normal projective 3-dimensional variety with ample anticanonical divisor $-K_X$ and $\Q$-factorial terminal singularities.
The \textit{Fano index} of $X$ is
\begin{equation*}
\iota_X \coloneqq \max \{ q \in \Z_{\geq 1} \; : \; -K_X \sim q A \text{ for some } A \in \Cl(X) \} \; .
\end{equation*}
The landscape of Fano 3-folds having Fano index 2 is partially understood: \cite{BrownSuzuki} gives a list of possible Hilbert series of these varieties, although that paper does not confirm the existence of any particular one. We call this list $\mathscr{H}^2_{\text{BS}}$.

\subsection{Main Theorem}

We consider Fano 3-folds that are embedded primitively into weighted projective spaces in codimension 4. 
The aim of this paper is to systematically build at least one deformation family for each 34 of the Hilbert series in $\mathscr{H}^2_{\text{BS}}$. The strategy is to retrieve them from just as many deformation families of codimension 4 Fano 3-folds $X$ of Fano index 1 by performing a quotient by a $\Z/2\Z$ action $\gamma$ on the ambient spaces of these $X$'s. 

The construction we achieve is summarised in this diagram:
\begin{equation} \label{diagram index 2}
\vcenter{\xymatrix{
	& \text{codim 4} & & \text{codim 3} \\
	\text{index 1} & X \ar[d]_{\Z/2\Z}^\gamma & & Z \ar@{-->}[ll]_{\text{unprojection}} \ar[d]^{\Z/2\Z}_\gamma \\
	\text{index 2} & \tilde{X} & & \tilde{Z}
}}
\end{equation}

The Main Theorem we prove is

\begin{theorem}\label{Main Theorem}
	For each of the 34 power series $\mathscr{P} \in \mathscr{H}^2_{\text{BS}}$ there exists at least one deformation family of codimension 4 Fano 3-folds $\tilde{X}$ having Fano index 2 such that $\mathscr{P}_{\tilde{X},A} = \mathscr{P}$ for $A \in \left| -\frac{1}{2} K_{\tilde{X}} \right| $. 
	For 18 of the Hilbert series in $\mathscr{H}^2_{\text{BS}}$ there are at least two distinguished deformation families.
\end{theorem}
In practice, we prove that for each $\mathscr{P} \in \mathscr{H}^2_{\text{BS}}$ there exists at least one deformation family of Fano index 1 Fano 3-folds $X \subset w\Proj^7$ with a $\Z/2\Z$ action $\gamma$ such that $\tilde{X} \coloneqq X\!/\!\gamma$ is a deformation family of quasi-smooth Fano 3-folds having Fano index 2, and the Hilbert series of $(\tilde{X},A)$ matches $\mathscr{P}$. 
We produce $X$ by Type I \cite{PapadakisComplexes,PapadakisReidKM} and Type II \cite{PapadakisTypeII} unprojections from particular Fano 3-folds $Z$ in codimension 3 and from Fano hypersurfaces respectively. 
The key is to have~$X$ invariant under~$\gamma$. We discuss how and when this is possible in Sections \ref{Tom families}, \ref{Jerry families}, \ref{Type II}. The details are summarised in Table \ref{Table TJ index 2}.

\subsection{Framework}

Some constructions of index 2 Fano 3-folds already exist in the literature. In \cite{ProkhorovReid} Prokhorov and Reid 
have constructed one example of an index 2 Fano 3-fold in codimension 3 and one in codimension~4 performing divisorial extractions of curves in $\Proj^3$ and in a quadric in $\Proj^4$ respectively. Their argument is concluded by running the Sarkisov Program initiated by such extractions. Their construction was further generalised by Ducat in \cite{ducat2018alaprokhreid}, who obtained one new family in codimension 4, and one each in codimension~5 and~6.

There are 35 Hilbert series in $\mathscr{H}^2_{\text{BS}}$ associated to codimension 4 Fano 3-folds with Fano index~2 as in the Graded Ring Database (GRDB) \cite{grdb, AltinokBrownReidFanoK3}, of which each realise one using their constructions.
A further case\footnotemark is the Hilbert series of the smooth Fano 3-fold with GRDB ID \#41028, constructed by Iskovskih in \cite[Case 13, Table (6.5)]{Iskovskih1,Iskovskih2}. We added it to Table \ref{Table TJ index 2} for completeness. 
\footnotetext{The Hilbert series with ID \#40367 and \#40378 also appear in the GRDB in index 2 and codimension 4. However, the Fano 3-folds associated to these two Hilbert series cannot embed in codimension 4 in the weighted projective space suggested by the Graded Ring Database. Indeed, neither our method nor the method in \cite{CoughlanDucat} construct them. These two Fano 3-folds may exist embedded in higher codimension.} 

Of the 34 Hilbert series relative to codimension 4 index 2 Fano 3-folds constructed with our method, 32 admit a double cover $X$ obtained via Type I unprojection from $Z$ in codimension 3. There are between two and four possible distinguished deformation families of $X$, associated to just as many formats (Tom and Jerry) of the antisymmetric matrix $M$ defining the equations of $Z$ \cite{T&Jpart1}.
Our construction is applicable to one or more of these deformation families: the number changes case by case, and the full extent of the result is outlined in Table \ref{Table TJ index 2}. Accordingly, $\tilde{X}$ has one or more distinguished deformation families. We give criteria to find deformation families for $\tilde{X}$ in Sections \ref{Tom families} and \ref{Jerry families}. In a similar fashion to the phenomena occurring in \cite{T&Jpart1}, some formats do not give rise to deformation families with the desirable features (terminality, for instance); we refer to this as to \textit{failure}. Fundamentally, most information is included in the geometry of $Z$.

On the other hand, two codimension 4 index 2 Fano 3-folds in the GRDB have a double cover $X$ obtained via Type II$_2$ unprojection. Performing the unprojection in this case is more complicated, and we refer to \cite{PapadakisGeneralTheoryUnproj,PapadakisTypeII,Taylor}. The double cover construction still works in this case: this is the content of Section \ref{Type II}.

The Hilbert series with GRDB ID \#40933, \#40663 are in the  overlap with the results of \cite{ProkhorovReid} and \cite{ducat2018alaprokhreid}. In particular, Ducat finds two different deformation families of \#40663, and we retrieve them here with our method (cf \cite[Section 3]{ducat2018alaprokhreid}).

A different approach to the construction of Fano 3-folds is given by Coughlan and Ducat in \cite{CoughlanDucat}, where the authors employ rank 2 cluster algebras to find families of Fano 3-folds in codimension 4 and 5, also in index 2. 
In addition, in certain cases they are able to determine different deformation families associated to the same Hilbert series (what they call \textit{cluster formats}). Cluster formats mimic the Tom and Jerry formats of \cite{T&Jpart1}. 
Our work finds more deformation families that are not realised by cluster formats, which could possibly have higher Picard rank.

\subsection{Details of the construction} \label{Tables}

The following table summarises all possible Tom and Jerry types for index 2 Fano 3-folds in codimension 4. In the last column it also records which index~1 formats (mostly Jerry) of \cite{TJBigTable} do not give rise to families in index~2. In the case where the same 
index 2 Fano 3-fold admits more than one format of the same type, we specify the centre for each format. 
The column \#$_\text{I}$ denotes the total number of distinguished deformation families constructed by Theorems \ref{criterion for Tom} and \ref{criterion for Jerry}. 
We write "n/a" in the cases in which the Tom and Jerry construction is not applicable. In particular, \#$_\text{II}$ represents the total number of distinguished deformation families when there is no Type I projection. We construct one in Section \ref{Type II}, but there could possibly be others. 
For the $\bullet_{ij}$ notation see end of Section \ref{Background} below, and \cite{TJBigTable}.

\begin{adjustwidth}{-30em}{-4em}
	\begin{center}
		\begin{longtable}{| c | c || c || c | c || c |}
			\caption{Deformation families of index 2 Fano 3-folds in codimension 4 \\and corresponding index 1 double covers} \label{Table TJ index 2}
			
			\centering
			\endfirsthead
			\endhead
			\hline
			Index 2 & Index 1 & \#$_\text{I}$ & Tom & Jerry & Failures \\
			\hline \hline
			39557 & 327 & 2 & $T_3$ & $J_{24}$ & none \\
			\hline
			39569 & 512 & \multicolumn{4}{c|}{n/a: no Type I projection. \#$_\text{II} \geq 1$} \\
			\hline
			39576 & 569 & 1 & $T_1$ & none & $J_{25}$ \\
			\hline
			39578 & 574 & 2 & $T_1$ & $J_{24} \bullet_{12}$ & $J_{45}$ \\
			\hline
			39605 & 869 & 2 & $T_4$  & $J_{13}$ & none \\
			\hline
			39607 & 872 &  \multicolumn{4}{c|}{n/a: no Type I projection. \#$_\text{II} \geq 1$} \\
			\hline
			39660 & 1158 & 2 & $T_5$ & $J_{12}$ & none \\
			\hline
			39675 & 1395 & 2 & $T_5$ & none & $J_{12}$ \\
			\hline
			\multirow{2}{2.5em}{39676} & \multirow{2}{2.0em}{1401} & \multirow{2}{0.5em}{1} & $\frac{1}{5} \colon T_2$ & \multirow{2}{2.5em}{none} & $\frac{1}{5} \colon J_{24} \bullet_{12}$, $J_{45}$  \\
			& & & $\frac{1}{7} \colon T_4$ & &  $\frac{1}{7} \colon J_{12} \bullet_{15}$, $J_{24}$  \\
			\hline
			39678 & 1405 & 1 & $T_1$ & none & $J_{24}$ \\
			\hline
			39890 & 4810 & 2 & $T_3$ & $J_{24}$ & $J_{14} \bullet_{13}$ \\
			\hline
			39898 & 4896 & 2 & $T_3$ & $J_{24}$ & $J_{14} \bullet_{13}$ \\
			\hline
			\multirow{2}{2.5em}{39906} & \multirow{2}{2.0em}{4925} & \multirow{2}{0.5em}{1} & $\frac{1}{7}(1,1,6) \colon T_2$ & \multirow{2}{2.5em}{none} & $\frac{1}{7}(1,1,6) \colon J_{34}$ \\
			& & & $\frac{1}{7}(1,3,4) \colon T_1$ & & $\frac{1}{7}(1,3,4) \colon J_{35}$ \\
			\hline
			39912 & 4938 & 1 & $T_2$ & none & $J_{12}$ \\
			\hline
			\multirow{2}{2.5em}{39913} & \multirow{2}{2.0em}{4939} & \multirow{2}{0.5em}{2} & $\frac{1}{5} \colon T_1$ & $\frac{1}{5} \colon J_{25} \bullet_{24}$ & $\frac{1}{5} \colon J_{35}$ \\
			& & & $\frac{1}{7} \colon T_2$ & $\frac{1}{7} \colon J_{14} \bullet_{13}$ & $\frac{1}{7} \colon J_{24}$ \\
			\hline
			39928 & 4987 & 1 & $T_5$ & none & $J_{12}$ \\
			\hline
			\multirow{2}{2.5em}{39929} & \multirow{2}{2.0em}{5000} & \multirow{2}{0.5em}{1} & $\frac{1}{5} \colon T_2$ & \multirow{2}{2.5em}{none} & $\frac{1}{5} \colon J_{45}$ \\
			& & & $\frac{1}{9} \colon T_4$ & & $\frac{1}{9} \colon J_{24}$ \\
			\hline
			39934 & 5052 & 2 & $T_1$ & $J_{23} \bullet_{13}$ & $J_{24}$ \\
			\hline
			\multirow{2}{2.5em}{39961} & \multirow{2}{2.0em}{5176} & \multirow{2}{0.5em}{1} & $\frac{1}{5} \colon T_2$ & \multirow{2}{2.5em}{none} & $\frac{1}{5} \colon J_{35}$ \\
			& & & $\frac{1}{7} \colon T_3$ & & $\frac{1}{7} \colon J_{25}$ \\
			\hline
			39968 & 5260 & 2 & $T_5$ & $J_{13}$ & none \\
			\hline
			\multirow{2}{2.5em}{39969} & \multirow{2}{2.0em}{5266} & \multirow{2}{0.5em}{2} & $\frac{1}{5} \colon T_3$ & $\frac{1}{5} \colon J_{24} \bullet_{25}$ & $\frac{1}{5} \colon J_{45}$ \\
			& & & $\frac{1}{7} \colon T_4$ & $\frac{1}{7} \colon J_{13} \bullet_{15}$ & $\frac{1}{7} \colon J_{34}$ \\
			\hline
			\multirow{3}{2.5em}{39970} & \multirow{3}{2.0em}{5279} & \multirow{3}{0.5em}{1} & $\frac{1}{3} \colon T_1$ & \multirow{3}{2.5em}{none} & $\frac{1}{3} \colon J_{45}$ \\
			& & & $\frac{1}{5}(1,1,4) \colon T_2$ & & $\frac{1}{5}(1,1,4) \colon J_{34}$ \\
			& & & $\frac{1}{5}(1,2,3) \colon T_1$ & & $\frac{1}{5}(1,2,3) \colon J_{35}$ \\
			\hline
			\multirow{2}{2.5em}{39991} & \multirow{2}{2.0em}{5516} & \multirow{2}{0.5em}{1} & $\frac{1}{3} \colon T_1$ & \multirow{2}{2.5em}{none} & $\frac{1}{3} \colon J_{45}$ \\
			& & & $\frac{1}{7} \colon T_3$ & & $\frac{1}{7} \colon J_{13}$ \\
			\hline
			\multirow{2}{2.5em}{39993} & \multirow{2}{2.0em}{5519} & \multirow{2}{0.5em}{2} & $\frac{1}{3} \colon T_1$ & $\frac{1}{3} \colon J_{34}$ & $\frac{1}{3} \colon J_{35}$ \\
			& & & $\frac{1}{5} \colon T_2$ & $\frac{1}{5} \colon J_{12}$ & $\frac{1}{5} \colon J_{13}$ \\
			\hline
			40360 & 10963 & 2 & $T_3$ & $J_{24}$ & $J_{14} \bullet_{13}$ \\
			\hline
			40370 & 11004 & 1 & $T_2$ & none & $J_{12}$ \\
			\hline
			\multirow{2}{2.5em}{40371} & \multirow{2}{2.5em}{11005} & \multirow{2}{0.5em}{2} & $\frac{1}{3} \colon T_1$ & $\frac{1}{3} \colon J_{25} \bullet_{24}$ & $\frac{1}{3} \colon J_{35}$ \\
			& & & $\frac{1}{5} \colon T_2$ & $\frac{1}{5} \colon J_{14} \bullet_{13}$ & $\frac{1}{5} \colon J_{24}$ \\
			\hline
			40399 & 11104 & 1 & $T_5$ & none & $J_{24}$ \\
			\hline
			\multirow{2}{2.5em}{40400} & \multirow{2}{2.5em}{11123} & \multirow{2}{0.5em}{1} & $\frac{1}{3} \colon T_3$ & \multirow{2}{2em}{none} & $\frac{1}{3} \colon J_{24} \bullet_{25}$, $J_{45}$ \\
			& & & $\frac{1}{5} \colon T_4$ & & $\frac{1}{5} \colon J_{12} \bullet_{15}$, $J_{24}$ \\
			\hline
			40407 & 11222 & 2 & $T_1$ & $J_{23} \bullet_{13}$ & $J_{24}$ \\
			\hline
			40663 & 16206 & 2 & $T_4$ & $J_{23}$ & $J_{12} \bullet_{15}$ \\
			\hline 
			40671 & 16227 & 1 & $T_2$ & none & $J_{12}$ \\
			\hline
			40672 & 16246 & 2 & $T_2$ & $J_{15} \bullet_{14}$ & $J_{25}$ \\
			\hline
			40933 & 24078 & 2 & $T_5$ & $J_{12}$ & $T_1$ \\
			\hline
			41028 & n/a & \multicolumn{4}{c|}{n/a} \\
			\hline
			
		\end{longtable}
	\end{center}
\end{adjustwidth}

\section{Background} \label{Background}

The lack of structure theorems for Fano 3-folds in codimension greater than 3 has forced the search for new approaches to produce their equations explicitly. Unprojections are a technique to retrieve equations for Fano 3-folds in codimension 4 from Fano 3-folds in lower codimension. 
They were firstly studied by Kustin and Miller \cite{KustinMiller}, and later on by Reid and Papadakis \cite{PapadakisReidKM,PapadakisComplexes}. 
There are different kinds of unprojections: the most widely employed are called Type I unprojections. 

Type I unprojections are initiated by the following type of data.
\begin{itemize}
	\item A fixed projective plane $D \coloneqq \Proj^2(a,b,c) \subset \Proj^6(a,b,c, d_1, \dots, d_4)$ with coordinates $x_1, x_2, x_3,$ $y_1, \dots,y_4$ respectively, and defined by the ideal $I_D \coloneqq \langle y_1,y_2,y_3,y_4 \rangle$.
	\item A family $\mathcal{Z}$ of codimension 3 Fano 3-folds $Z \subset w\Proj^6$, each defined by the five maximal pfaffians of a skew-symmetric $5 \times 5$ syzygy matrix $M$ whose entries $(a_{ij})$ have weights 
	\begin{equation*}
	\left(
	\begin{array}{c c c c}
	m_{12} & m_{13} & m_{14} & m_{15} \\
	& m_{23} & m_{24} & m_{25} \\
	& & m_{34} & m_{35} \\
	& & & m_{45}
	\end{array}
	\right) \; .
	\end{equation*}
\end{itemize}

In this context, two kinds of formats arise for $M$, based on conditions on its entries. These are the so-called \textit{Tom and Jerry formats}. If $M$ is in one of these formats, then $D \subset Z \subset w\Proj^6$

\begin{definition}[\cite{T&Jpart1}, Definition 2.2]\label{TJ definition}
	A $5 \times 5$ skew-symmetric matrix $M$ is in Tom$_k$ format if and only if each entry $a_{ij}$ for $i,j \neq k$ is in the ideal $I_D$.
	
	It is in Jerry$_{kl}$ format if and only if $a_{ij} \in I_D$ for either $i$ or $j$ equals $k$ or $l$. 
	If $M$ is in Jerry$_{kl}$ format, we call \textit{pivot entry} the entry $a_{kl} \in I_D$. 
\end{definition}

Recall the definition of Fano 3-fold of \textit{Tom type} (respectively, of \textit{Jerry type}).

\begin{definition}[\cite{CampoSarkisov}, Definition 2.2] \label{Tom type def}
	Let $X$ be a codimension 4 index 1 Fano 3-fold $X$ listed in the table \cite{TJBigTable}. We say $X$ is \textit{of Tom (Jerry) type} if it is obtained as Type I unprojection of the codimension 3 pair $Z \supset D$ in a Tom (Jerry) family \cite{T&Jpart1,PapadakisReidKM}. 
	The image of $D \subset Z$ in $X$ is called \textit{Tom (Jerry) centre}: it is a cyclic quotient singularity $p \in X$. In the unprojection setup $D \subset Z$, $D$ is a complete intersection of four linear forms of weight $d_1, \dots, d_4$. Such $X$ of Tom (Jerry) type is said to be \textit{general} if $Z \supset D$ is general in its Tom (Jerry) family.
\end{definition}
Note that $X$ is quasi-smooth \cite[Theorem 3.2]{T&Jpart1} and Gorenstein \cite[Theorems 5.6 and 5.14]{PapadakisComplexes}; our arguments begin within this framework.

The formats denoted by $\bullet_{ij}$ in Table \ref{Table TJ index 2} are those where: either, the matrix $M$ can be manipulated by row/column operations so that the entry $m_{ij}$ is 0; or, there is no polynomial that fits entry $m_{ij}$ such that it satisfies the Tom and Jerry constraints.

\section{Double covers} \label{double covers}

Let $X$ be a $\Q$-Fano 3-fold in codimension 4 anticanonically embedded in a weighted $w\PP^7$ having Fano index 1 and such that $h^0(X,-K_X) \geq 1$. Thus we assume that $w\PP^7 = \Proj^7(1,b,c, d_1, \dots, d_4, r)$ with homogeneous coordinates $x_1, x_2, x_3, y_1, \dots, y_4, s$. 
Consider the $\Z/2\Z$ action on $w\Proj^7$ that changes the sign of $x_1$ of weight $\wt(x_1)=1$. 
\begin{equation} \label{Z2 action}
\gamma \colon \left( x_1, x_2, x_3, y_1, y_2, y_3, y_4, s \right) \longmapsto \left( -x_1, x_2, x_3, y_1, y_2, y_3, y_4, s \right) \; .
\end{equation}
First we  describe the quotient of the ambient space of $X$ by the $\Z/2\Z$ action $\gamma$.

\begin{lemma}
	The $\Z/2\Z$ quotient of $\Proj^7(1,b,c, d_1, \dots, d_4, r)$ via $\gamma$ is the weighted projective space $\Proj^7(2,b,c, d_1, \dots, d_4, r)$ with coordinates $\xi, x_2,x_3,y_1,\dots,y_4,s$ respectively, where $\xi:= x_1^2$.
\end{lemma}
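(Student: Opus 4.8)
The plan is to analyze the action $\gamma$ on the graded coordinate ring and identify the invariant subring explicitly, since a quotient of a weighted projective space by a finite group acting on its Cox ring (here the polynomial ring in the homogeneous coordinates) is computed as the Proj of the ring of invariants. Recall that $\Proj^7(1,b,c,d_1,\dots,d_4,r)$ is $\proj$ of the polynomial ring $R = \C[x_1,x_2,x_3,y_1,\dots,y_4,s]$ with the stated grading. The action $\gamma$ sends $x_1 \mapsto -x_1$ and fixes every other variable, so the induced action on $R$ is the $\Z/2\Z$-action multiplying each monomial by $(-1)^{e}$, where $e$ is the exponent of $x_1$.

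First I would compute the invariant subring $R^\gamma$. A monomial $x_1^{e_0} x_2^{e_1} x_3^{e_2} y_1^{f_1}\cdots y_4^{f_4} s^{g}$ is fixed by $\gamma$ precisely when $e_0$ is even. Hence $R^\gamma$ is generated by $\xi := x_1^2$ together with $x_2,x_3,y_1,\dots,y_4,s$, and there are no relations among these generators other than those already forcing $R^\gamma$ to be a polynomial ring: indeed the monomials with even $x_1$-exponent are exactly the monomials in $\xi,x_2,\dots,s$, and distinct such monomials remain distinct, so
\begin{equation*}
R^\gamma \;=\; \C[\xi, x_2, x_3, y_1,\dots,y_4, s] ,
\end{equation*}
a polynomial ring in eight variables. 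Since $x_1$ has weight $1$, the new generator $\xi = x_1^2$ has weight $2$, while the weights of the remaining generators are unchanged. This identifies $R^\gamma$ as the Cox ring of $\Proj^7(2,b,c,d_1,\dots,d_4,r)$ with the grading claimed.

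Next I would pass from rings to schemes. Because $\gamma$ acts $\Z/2\Z$-equivariantly and $R^\gamma$ is again a graded polynomial ring, one has $\Proj^7(1,b,c,d_1,\dots,d_4,r)/\gamma \cong \proj R^\gamma = \Proj^7(2,b,c,d_1,\dots,d_4,r)$, with the identification sending the class of $(x_1,x_2,\dots,s)$ to $(\xi,x_2,\dots,s) = (x_1^2,x_2,\dots,s)$. The slight subtlety is that $\proj$ depends only on the grading up to the usual equivalence of weighted projective spaces, so I would confirm that no spurious common factor has been introduced among the new weights $(2,b,c,d_1,\dots,d_4,r)$ that would allow a well-formed reduction; the weight tuple is read off directly and stated as is.

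The main obstacle I expect is the scheme-theoretic step rather than the ring computation: one must check that taking invariants commutes with $\proj$, i.e. that the geometric quotient of the weighted projective space coincides with $\proj$ of the invariant ring, including correct behaviour at the irrelevant locus and at the fixed locus $\{x_1 = 0\}$ of $\gamma$. For a finite abelian group acting linearly on affine space this is standard (the quotient of $\Spec R$ is $\Spec R^\gamma$), and the grading is preserved because $\gamma$ acts by a graded automorphism; the weighted-$\proj$ statement then follows by taking the induced $\mathbb{G}_m$-quotient. I would phrase the argument so that the invariant-ring identification does the real work and the passage to $\Proj$ is an application of this standard fact.
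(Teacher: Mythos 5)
Your proposal is correct and follows essentially the same route as the paper: both identify the invariant ring $\C[x_1,x_2,\dots,s]^\gamma = \C[x_1^2,x_2,\dots,s]$ as a polynomial ring with $\xi = x_1^2$ in weight $2$ and conclude that the quotient is $\proj$ of this ring. The only cosmetic difference is that the paper justifies the compatibility of the quotient with $\proj$ by inspecting the standard affine patches $\mathcal{U}_{x_i}$, whereas you invoke the general fact that finite-group quotients of affine schemes are $\Spec$ of invariants and that $\gamma$ acts by a graded automorphism.
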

\begin{proof}
	Consider the affine patches of $\Proj^7(1,b,c, d_1, \dots, d_4, r) = \proj \C(x_1,x_2,x_3, y_1, \dots, y_4,s)$. For instance,
	\begin{equation*}
	\mathcal{U}_{x_2}\coloneqq \{ x_2 \not= 0 \} = \Spec \C \left[ x_1, \hat{x}_2,x_3,y_1,\dots,y_4,s \right]^{\mu_b} \cong \A^7/\mu_b
	\end{equation*}
	where $\mu_b$ is the finite cyclic group of order $b$. The affine patches relative to the coordinates $x_3,y_1,\dots,y_4,s$ are analogous. 
	They are invariant under $\gamma$ if and only if $x_1$ appears with even powers.
	Such affine patches in which powers of $\xi$ appear are exactly the affine patches of $\Proj^7(2,b,c, d_1, \dots, d_4, r)$, with the new coordinate $\xi$. So, 
	\begin{align*}
	\Proj^7(1,b,c, d_1, \dots, d_4, r)/\gamma &= \proj \C(x_1,x_2,x_3, y_1, \dots, y_4,s)^\gamma = \proj \C(x_1^2,x_2,x_3, y_1, \dots, y_4,s) \\
	&= \Proj^7(2,b,c, d_1, \dots, d_4, r) \; .
	\end{align*}
	We identify the homogeneous coordinates of $X$ and $\tilde{X}$ and define $\xi:= x_1^2$.
\end{proof}

Hence, for $X \subset \Proj^7(1,b,c, d_1, \dots, d_4, r)$ to be invariant under $\gamma$, the variable $x_1$ must appear only with even powers. When this is the case, then the quotient $\tilde{X} \coloneqq X/\gamma$ sits inside the weighted projective space $\Proj^7(2,b,c, d_1, \dots, d_4, r)$ and is a Fano 3-fold: its anticanonical divisor $-K_{\tilde{X}}$ is a multiple of an ample divisor (see proof of Lemma \ref{index of Xtilde}), it has terminal singularities (see Lemma \ref{Xtilde has terminal sings}), it is $\Q$-factorial.

Let $\Proj_{\text{even}}$ be the weighted projective space defined by the vanishing of all the coordinates of $w\Proj^7$ with odd weight, except for $x_1$. 
In this section, let us assume that it is possible to realise $X \subset \Proj^7(1,b,c, d_1, \dots, d_4, r)$ as invariant under $\gamma$. To fix ideas, we call the defining equations of $X = \{ f_i=0 \}_{i=1}^9$ and the equations of $\tilde{X} = \{ \tilde{f}_i=0 \}_{i=1}^9$, where $\tilde{f}_i = f_i(\xi, \dots, s)$. 
We can therefore draw the following conclusions.

\begin{lemma} \label{index of Xtilde}
	The 3-fold $\tilde{X}$ has Fano index 2, and $-K_{\tilde{X}}$ is ample.
\end{lemma}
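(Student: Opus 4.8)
The plan is to track the canonical class through the quotient map $\pi \colon X \to \tilde{X}$ and read off the index from the relation it forces between $-K_{\tilde{X}}$ and the weight-$1$ generator. Write $A_X \coloneqq \mathcal{O}_X(1)$ for the class of the weight-$1$ hyperplane $\{x_1 = 0\}$; since $X$ is anticanonically embedded of index $1$ we have $-K_X \sim A_X$. Because $\gamma$ fixes precisely the locus $\{x_1=0\}$, the map $\pi$ is a double cover branched along $B \coloneqq \{\xi = 0\} \cap \tilde{X} \in \left| \mathcal{O}_{\tilde{X}}(2) \right|$, with reduced ramification divisor $R \coloneqq \{x_1 = 0\} \cap X \in |A_X|$.

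First I would record the two divisor-theoretic identities coming from this double cover. Pulling back the branch locus gives $\pi^{*} B = 2R$ as divisors, because $\xi = x_1^2$; hence, writing $\tilde{A} \coloneqq \mathcal{O}_{\tilde{X}}(2)$, one has $\pi^{*}\tilde{A} = 2 A_X$ in $\Cl(X)$. Riemann--Hurwitz for the branched double cover gives $K_X \sim \pi^{*} K_{\tilde{X}} + R$. Substituting $K_X \sim -A_X$ and $R \sim A_X$ yields $\pi^{*} K_{\tilde{X}} \sim -2A_X = \pi^{*}(-\tilde{A})$, and since $\pi^{*}$ is injective on $\Cl(\tilde{X})$ (the class groups here being torsion-free of rank one) I conclude $-K_{\tilde{X}} \sim \tilde{A} = \mathcal{O}_{\tilde{X}}(2) = 2A$, where $A \coloneqq \mathcal{O}_{\tilde{X}}(1)$. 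This already gives $\iota_{\tilde{X}} \ge 2$.

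Finally I would upgrade this to index \emph{exactly} $2$ by checking that $A$ is primitive in $\Cl(\tilde{X})$. Since $\pi^{*}\tilde{A} = 2A_X$ and the class groups are torsion-free, $\pi^{*}A = A_X$; if $A = mA''$ for some $A'' \in \Cl(\tilde{X})$ with $m \ge 2$, then $A_X = m\,\pi^{*}A''$ would contradict the primitivity of $A_X$ in $\Cl(X) = \Z \cdot A_X$, which holds because $X$ has index $1$ and Picard rank one. Hence $A$ is primitive, no $q > 2$ can satisfy $-K_{\tilde{X}} \sim qA'$, and $\iota_{\tilde{X}} = 2$ with $A \in \left| -\tfrac12 K_{\tilde{X}} \right|$ as required. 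The point needing care---the main obstacle---is the justification of the Riemann--Hurwitz identity and the injectivity and primitivity bookkeeping in the weighted, $\Q$-factorial setting: one must invoke quasi-smoothness and well-formedness of $\tilde{X}$ to guarantee that the $\gamma$-fixed locus contributes no ramification outside the divisor $R$ and that $\mathcal{O}_{\tilde{X}}(1)$ is a genuine ample Weil divisor class. As a consistency check, the same answer drops out of adjunction: the defining equations of $X$ involve $x_1$ only through $\xi = x_1^2$, so they have identical degrees in $\Proj^7(2,b,c,d_1,\dots,d_4,r)$, while the ambient weight-sum rises by exactly $1$; reading this off the Gorenstein symmetry of the coordinate ring $R_{\tilde{X}} = R_X^{\gamma}$ (rather than a complete-intersection count, since $X$ is a non-complete-intersection codimension-$4$ Gorenstein variety) shifts the canonical sheaf from $\mathcal{O}_X(-1)$ to $\mathcal{O}_{\tilde{X}}(-2)$.
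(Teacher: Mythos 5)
Your argument is essentially the paper's own proof: both apply the ramification formula $K_X \sim \pi^*K_{\tilde X} + R$ for the double cover, identify $R = \{x_1=0\}$ with the anticanonical divisor of the index-1 Fano $X$, and conclude $-K_{\tilde X} \sim \mathcal{O}_{\tilde X}(2)$. The only difference is that you additionally verify primitivity of $A$ so that the index is exactly $2$ rather than merely divisible by $2$ — a point the paper leaves implicit — but the route is the same.
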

\begin{proof}
	Consider the quotient map $\varphi \colon X \rightarrow \tilde{X}$. Then, the anticanonical divisor of $X$ is given by $-K_X = -\varphi^*K_{\tilde{X}} - R$ where $R$ is the ramification divisor. In our case, $-K_X = \{x_1=0 \} \sim \mathcal{O}(1)$, and the ramification divisor is $R= \{x_1=0 \}$. Therefore, $-\varphi^*K_{\tilde{X}} = 2 \{x_1=0 \}$. This implies that $-K_{\tilde{X}} = \{\xi=0 \}\sim \mathcal{O}(2)$: thus, $\tilde{X}$ has index 2. 
	Moreover, since $-K_X$ is ample, so is $-K_{\tilde{X}}$.
\end{proof}

\begin{lemma} \label{Xtilde is quasismooth}
	If $X$ is quasismooth, then $\tilde{X}$ is quasismooth.
\end{lemma}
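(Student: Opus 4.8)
The plan is to pass to the affine cones and argue with the Jacobian criterion. Recall that $X \subset \Proj^7(1,b,c,d_1,\dots,d_4,r)$ is quasismooth exactly when its punctured affine cone $C_X^* \coloneqq C_X \setminus \{\mathbf 0\} \subset \A^8 \setminus \{\mathbf 0\}$ is smooth, and similarly for $\tilde X \subset \Proj^7(2,b,c,d_1,\dots,d_4,r)$. Let $f_1,\dots,f_m$ be the defining equations of $X$, regarded as quasihomogeneous polynomials in $x_1,x_2,x_3,y_1,\dots,y_4,s$. Invariance of $X$ under $\gamma$ means that each $f_i$ involves $x_1$ only through even powers, so there are polynomials $\tilde f_i$ with $f_i = \tilde f_i(x_1^2,x_2,x_3,y_1,\dots,y_4,s)$, and these $\tilde f_i(\xi,x_2,\dots,s)$ are precisely the equations cutting out $C_{\tilde X} \subset \A^8$. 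Introducing the finite morphism
\begin{equation*}
\varphi \colon \A^8 \to \A^8, \qquad (x_1,x_2,x_3,y_1,\dots,y_4,s) \mapsto (x_1^2,x_2,x_3,y_1,\dots,y_4,s),
\end{equation*}
I would record that $f_i = \tilde f_i \circ \varphi$, hence $C_X = \varphi^{-1}(C_{\tilde X})$, that $\varphi^{-1}(\mathbf 0) = \{\mathbf 0\}$, and that $\varphi$ maps $C_X^*$ onto $C_{\tilde X}^*$ (any point of $C_{\tilde X}^*$ lifts after choosing a square root of $\xi$).

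Next I would compare the two Jacobians. Differentiating $f_i = \tilde f_i \circ \varphi$ gives
\begin{equation*}
\frac{\partial f_i}{\partial x_1} = 2 x_1 \, \frac{\partial \tilde f_i}{\partial \xi} \circ \varphi, \qquad \frac{\partial f_i}{\partial v} = \frac{\partial \tilde f_i}{\partial v} \circ \varphi \quad \text{for } v \in \{x_2,x_3,y_1,\dots,y_4,s\}.
\end{equation*}
So at a point $p \in C_X^*$ with image $\tilde p = \varphi(p)$, the Jacobian of $(f_1,\dots,f_m)$ at $p$ is obtained from that of $(\tilde f_1,\dots,\tilde f_m)$ at $\tilde p$ by scaling the $\xi$-column by the factor $2x_1$ and leaving the other columns unchanged. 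Since $\dim C_X = \dim C_{\tilde X} = 4$ and both cones lie in $\A^8$, quasismoothness is the statement that these Jacobians have rank $4$ away from the origin. Because $\varphi$ surjects $C_X^*$ onto $C_{\tilde X}^*$, it therefore suffices to show that the Jacobian of $(\tilde f_i)$ attains rank $4$ at every $\tilde p \in C_{\tilde X}^*$, using the full rank of the Jacobian of $(f_i)$ at a chosen preimage $p$.

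I would then split into two cases by the value of $\xi$. When $\xi \neq 0$ one has $x_1 \neq 0$ at $p$, so scaling the $\xi$-column by the nonzero factor $2x_1$ does not change the rank; the two Jacobians have equal rank and smoothness of $C_X^*$ at $p$ transfers directly. The delicate case, and the only place where the argument is not a formality, is the ramification locus $\xi = 0$, where $p$ has $x_1 = 0$. There the factor $2x_1$ vanishes, so the $x_1$-column of the Jacobian of $(f_i)$ at $p$ is identically zero; consequently its rank $4$ must already be attained by the columns indexed by $x_2,x_3,y_1,\dots,y_4,s$. Those columns coincide with the corresponding columns of the Jacobian of $(\tilde f_i)$ at $\tilde p$, so the latter has rank at least $4$, hence exactly $4$. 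In both cases the Jacobian of $(\tilde f_i)$ has full rank at $\tilde p$, whence $C_{\tilde X}^*$ is smooth and $\tilde X$ is quasismooth. The main obstacle to keep in view is exactly this ramification locus: one must confirm that quasismoothness of $X$ along $\{x_1 = 0\}$ forces the transverse directions alone to cut $X$ out smoothly there, which is what makes the vanishing of the $x_1$-column harmless rather than fatal.
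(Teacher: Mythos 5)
Your proof is correct and follows essentially the same route as the paper: compare the Jacobians of $X$ and $\tilde X$ column by column, observe via the chain rule that the $x_1$-column is $2x_1$ times the $\xi$-column, and split into the cases $x_1\neq 0$ (where the ranks agree) and $x_1=0$ (where full rank of $J_X$ must come from the remaining columns, which coincide with those of $J_{\tilde X}$). If anything, your version is the more carefully executed one, since you make explicit both the surjectivity of the squaring map on punctured cones and the precise reason the vanishing $x_1$-column is harmless, points the paper's proof leaves implicit.
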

\begin{proof}
	For $p \in w\Proj^7$ consider the Jacobian matrix $J_X$ of $X$ at the point $p$ as in \cite[Section 5, pp. 31-32]{Hartshorne}. 	Define the variety $V_X$ as
	\begin{equation*}
	V_X := \{ p \in w\Proj^7 \; : \; \rk \left( J_X |_p \right) < \codim (X) \} \; .
	\end{equation*}
	The affine cone of $V_X$ is the singular locus of the affine cone of $X$. 
	By definition, if $V_X$ is empty, then $X$ is quasismooth. 
	Suppose $V_X$ empty. 
	For each equation $f_i$ of $X$, $\frac{\partial f_i}{\partial x_1} = \frac{\partial \tilde{f}_i}{\partial \xi}\frac{\partial \xi}{\partial x_1}$, and $\frac{\partial \xi}{\partial x_1}= 2 x_1$. 
	The difference between the Jacobian matrices $J_X$ and $J_{\tilde{X}}$ lies in the column relative to the derivative by $x_1$. Suppose $x_1 \not= 0$; then, the rank of $J_X$ is equal to the rank of $J_{\tilde{X}}$. 
	If instead $x_1 = 0$, certain entries of the $\frac{\partial}{\partial x_1}$ column of $J_X$ might vanish for $x_1 = 0$, whereas they would be just constant in $J_{\tilde{X}}$.
	Thus, for $x_1 = 0$ we have that $\rk J_X \leq \rk J_{\tilde{X}}$; so, $\tilde{X}$ is quasismooth if $X$ is.
\end{proof}

\begin{lemma} \label{Xtilde has terminal sings}
	The 3-fold $\tilde{X}$ has terminal singularities.
\end{lemma}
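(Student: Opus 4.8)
The plan is to reduce terminality of $\tilde{X}$ to a local computation at finitely many points, exploiting that $f\colon X \to \tilde{X}$ is a cyclic double cover. Terminality is an analytically-local property, and the $3$-fold terminal singularities of a quasi-smooth $X$ are isolated cyclic quotient singularities located on the coordinate strata of $\Proj^7(1,b,c,d_1,\dots,d_4,r)$ that carry non-trivial isotropy. The fixed locus is $\Fix(\gamma) = \{x_1 = 0\} \cup \{P_{x_1}\}$, where $P_{x_1} = [1:0:\dots:0]$. Away from $\Fix(\gamma)$ the map $f$ is étale, so $\tilde{X}$ is analytically isomorphic to $X$ and hence terminal there; thus it suffices to examine the points of $\tilde{X}$ lying under $\Fix(\gamma)$.

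Next I would write the local model at a point $P \in \Fix(\gamma) \cap X$. Since $X$ is $\gamma$-invariant, the coordinate $x_1$ occurs only to even powers in the equations, so $x_1$ never appears linearly and the $x_1$-direction is always tangent to $X$. Hence at a quotient singularity $\frac{1}{n}(1,a,n-a)$ of $X$ (where $n$ is the weight of the coordinate point and the leading $1$ is the $x_1$-weight) the smooth cover is $\A^3/\mu_n$, and $\gamma$ lifts to the reflection $\frac{1}{2}(1,0,0)$. The combined group is $G=\langle \frac{1}{n}(1,a,n-a),\,\frac{1}{2}(1,0,0)\rangle$ and $\tilde{X}$ is locally $\A^3/G$. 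I would first quotient by the reflection: by Chevalley--Shephard--Todd, concretely by passing to the invariant $\xi = x_1^2$, this quotient is smooth, and the residual cyclic group acts on the new chart as $\frac{1}{n}(2,a,n-a)$. At a smooth point of $X$ (the case $n=1$) this already recovers smoothness of $\tilde{X}$, consistent with Lemma~\ref{Xtilde is quasismooth}.

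It then remains to show $\frac{1}{n}(2,a,n-a)$ is terminal, which I would settle with the terminal lemma (Reid--Tai): rescaling the generator by $a^{-1}\bmod n$ puts it in the normal form $\frac{1}{n}(1,-1,2a^{-1})$, so the singularity is terminal exactly when $\gcd(2a^{-1},n)=\gcd(2,n)=1$. I expect this number-theoretic step to be the main obstacle, since it forces a compatibility between the index-$2$ doubling of the $x_1$-weight and the isotropy orders $n \in \{b,c,d_1,\dots,d_4,r\}$ arising at singular points; this is precisely where the specific weights of the families in Table~\ref{Table TJ index 2} (equivalently, the prescribed terminal baskets of $\mathscr{H}^2_{\text{BS}}$) must enter to guarantee the orders have the right parity. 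Finally I would treat the isolated fixed point $P_{x_1}$ separately: if it lies on $X$ it is a smooth point, since the $x_1$-weight is $1$, and its image $P_\xi$ acquires the weight-$2$ isotropy characteristic of index $2$, yielding a $\frac{1}{2}(1,1,1)$ point that one checks directly is terminal. Assembling the contributions of the étale locus, the branch divisor, and these finitely many quotient points then shows that $\tilde{X}$ has terminal singularities.
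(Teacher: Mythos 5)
Your local analysis along the branch divisor $\{x_1=0\}$ is sound, and in places sharper than the paper's own proof: the computation that a $\frac{1}{n}(1,a,n-a)$ point of $X$ descends to a $\frac{1}{n}(2,a,n-a)$ point of $\tilde X$, terminal precisely when $n$ is odd, is exactly the parity condition the paper defers to Theorems \ref{criterion for Tom} and \ref{criterion for Jerry}. But there is a genuine gap at the very first step: the fixed locus of $\gamma$ on $\Proj^7(1,b,c,d_1,\dots,d_4,r)$ is not $\{x_1=0\}\cup\{P_{x_1}\}$. A point with $x_1\neq 0$ is fixed as soon as all of its odd-weight coordinates other than $x_1$ vanish, because one may rescale by $\lambda=-1$, which absorbs the sign on $x_1$ and acts trivially on every even-weight coordinate. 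Hence $\Fix(\gamma)=\{x_1=0\}\cup\Proj_{\text{even}}$, where $\Proj_{\text{even}}$ is the coordinate subspace spanned by $x_1$ and all even-weight coordinates; this is in general a positive-dimensional stratum rather than the single point $P_{x_1}$ (for \#1158, with weights $1,2,3,5,5,7,12,17$, it is a $\Proj^2$).

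Consequently your case analysis omits the points of $X\cap\Proj_{\text{even}}$ with $x_1\neq 0$. At such a point $\gamma$ acts, after the rescaling, by negating the odd-weight local coordinates while fixing $x_1$; the induced involution on the $3$-fold $X$ could have fixed locus of dimension $0$, $1$ or $2$ there, and in the one-dimensional case the quotient acquires a non-isolated $\frac{1}{2}(1,1,0)$ singularity (a transverse $A_1$), which is canonical but not terminal since terminal $3$-fold singularities are isolated. The paper's proof turns on verifying that $X\cap\Proj_{\text{even}}=\emptyset$ --- in particular $P_{x_1}\notin X$ --- so that no such points occur; your argument has no counterpart for this step, and it is not automatic (it is a genericity statement about how the codimension~$4$ variety $X$ meets a coordinate stratum). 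Relatedly, your treatment of $P_{x_1}$ quietly conflicts with the intended setup: a $\frac{1}{2}(1,1,1)$ point on $\tilde X$ would indeed be terminal in the abstract, but it is excluded here precisely because terminal singularities on an index~$2$ Fano $3$-fold must have odd order, and allowing it would change the basket and hence the Hilbert series.
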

\begin{proof}
	The fixed locus of the group action $\gamma$ is $\Fix(\gamma) = \{x_1=0 \} \cup \Proj_{\text{even}}$. 
	We want to study the intersection $X \cap \Fix(\gamma)$.
	
	From \cite[Theorems 5.6 and 5.14]{PapadakisComplexes} we have that $X$ is Gorenstein; thus, the base locus $\Bs|-K_X|$ does not contain any non-Gorenstein points. Then, all the cyclic quotient singularities of $X$ lie at coordinate points different from $P_{x_1}$. Thus, they all lie inside the locus $\{x_1=0 \}$. 
	
	On the other hand, $ X \cap \Proj_{\text{even}} = \emptyset$. Suppose that $ X \cap \Proj_{\text{even}} \not= \emptyset$. If $\dim X \cap \Proj_{\text{even}} \geq 1$ we reach a contradiction, because $X$ is terminal. 
	If instead $\dim X \cap \Proj_{\text{even}} = 0$, that is, $ X \cap \Proj_{\text{even}}$ is a finite number of points, such points would be singularities of type $\frac{1}{r} (1,a,b)$ with even $r$, in accordance to the basket of singularities of $X$. However, none of the codimension 4 index 1 candidate double covers $X$ listed in the second row of Table \ref{Table TJ index 2} has a basket that contains cyclic quotient singularities of even order. Therefore, we conclude that $ X \cap \Proj_{\text{even}} = \emptyset$. 
	Thus, $\tilde{X}$ has terminal singularities.
\end{proof}

The above lemmas prove the following Theorem, which will be crucial in the rest of this paper to prove a criterion for finding the deformation families of $\tilde{X}$.
\begin{theorem} \label{Xtilde in grdb}
	If $X \subset \Proj^7(1,b,c, d_1, \dots, d_4, r)$ has an invariant realisation under $\gamma$, then $\tilde{X}$ is a terminal $\Q$-factorial Fano 3-fold. Hence, it is in the Graded Ring Database.
\end{theorem}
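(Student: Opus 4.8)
The statement is a synthesis of the four preceding lemmas together with the definition of a Fano 3-fold given in the introduction, so the plan is to check each clause of that definition in turn and then read off the final assertion. Recall that $\tilde{X}$ is a Fano 3-fold precisely when it is a normal projective 3-dimensional variety with ample anticanonical divisor and $\Q$-factorial terminal singularities. Two of these ingredients are already isolated: Lemma \ref{Xtilde has terminal sings} gives terminal singularities, and Lemma \ref{index of Xtilde} together with its proof gives $-K_{\tilde{X}} \sim \mathcal{O}(2)$ and index $2$. It remains to verify normality, projectivity and dimension, the ampleness of $-K_{\tilde{X}}$, and $\Q$-factoriality.

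First I would dispose of the basic geometric properties. Since $X$ is a quasismooth codimension 4 Fano 3-fold, Lemma \ref{Xtilde is quasismooth} shows $\tilde{X}$ is quasismooth; a quasismooth closed subvariety of a weighted projective space is normal, with at worst cyclic quotient singularities, so $\tilde{X}$ is normal. It is projective as a closed subscheme of $\Proj^7(2,b,c,d_1,\dots,d_4,r)$, and the quotient map $f \colon X \to \tilde{X}$ is finite, hence $\dim \tilde{X} = \dim X = 3$. For ampleness, the proof of Lemma \ref{index of Xtilde} identifies $-K_{\tilde{X}}$ with $\mathcal{O}(2) = 2A$, where $A \in \left| -\tfrac{1}{2} K_{\tilde{X}} \right|$ is the restriction of the tautological class $\mathcal{O}(1)$ of the ambient weighted projective space. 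Since $\mathcal{O}(1)$ is ample on $\Proj^7(2,b,c,d_1,\dots,d_4,r)$ and ampleness is preserved under restriction to closed subvarieties, $A$, and therefore $-K_{\tilde{X}} = 2A$, is ample.

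The one clause not handled directly by a lemma is $\Q$-factoriality, and this is where I would spend the real effort. By hypothesis $X$ is a Fano 3-fold of index 1, hence $\Q$-factorial, and $\tilde{X} = X/\gamma$ is the quotient of $X$ by the finite group $\Z/2\Z$ generated by $\gamma$. I would invoke the standard fact that a finite quotient of a normal $\Q$-factorial variety is again $\Q$-factorial: given a Weil divisor $D$ on $\tilde{X}$, its pullback $f^*D$ is a $\gamma$-invariant Weil divisor on $X$, so some multiple $m\, f^*D$ is Cartier because $X$ is $\Q$-factorial; after further multiplying by the group order to make the local equations $\gamma$-invariant, this data descends along $f$ to exhibit a multiple of $D$ as Cartier on $\tilde{X}$. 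The main obstacle is precisely the careful execution of this descent, namely controlling the linearisation of the local equations at the fixed points of $\gamma$, which by Lemma \ref{Xtilde has terminal sings} are exactly the terminal cyclic quotient singular points of $\tilde{X}$; but no geometric input beyond the $\Q$-factoriality of $X$ is required.

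Finally, having verified that $\tilde{X}$ is a normal projective 3-fold with ample $-K_{\tilde{X}}$ and $\Q$-factorial terminal singularities, it is by definition a $\Q$-factorial terminal Fano 3-fold, and by Lemma \ref{index of Xtilde} of index 2. Its graded coordinate ring then has a Hilbert series of exactly the type catalogued by the Graded Ring Database for terminal $\Q$-factorial Fano 3-folds, so $\tilde{X}$, equivalently its Hilbert series, appears there; this is the concluding assertion of the statement.
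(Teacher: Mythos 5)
Your proposal is correct and its skeleton coincides with the paper's: the paper's own proof is just two sentences, citing Lemma \ref{Xtilde has terminal sings} for terminality and Lemma \ref{index of Xtilde} for the index, and then concluding that $\tilde{X}$ appears in the GRDB. Where you genuinely go beyond the paper is in the clauses the paper leaves implicit. The paper asserts $\Q$-factoriality of $\tilde{X}$ in the running text before the theorem but never argues it; you supply the standard descent argument that a finite quotient of a normal $\Q$-factorial variety is $\Q$-factorial (equivalently, that for a finite surjection $f \colon X \to \tilde{X}$ of normal varieties one can take a Weil divisor $D$ on $\tilde{X}$, make $m f^*D$ Cartier upstairs, and push it back down via the norm to exhibit $m \deg(f) D$ as Cartier). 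Likewise you verify normality via quasismoothness (Lemma \ref{Xtilde is quasismooth}) and ampleness of $-K_{\tilde{X}}$ by restriction of $\mathcal{O}(1)$ from the ambient weighted projective space, neither of which the paper spells out. The only caveats are minor: the quasismoothness input formally requires the standing assumption that $X$ itself is quasismooth, which the theorem statement does not repeat but the paper assumes throughout; and your $\Q$-factoriality argument, while standard and correct, is doing work the author chose to take as known. In short, your proof buys completeness at the cost of length, and the paper's buys brevity by leaning on unstated standard facts; the mathematical content where they overlap is identical.
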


In contrast, our method does not produce the index 2 Fano 3-folds in codimension 3 in the Graded Ring Database. Restrict the action $\gamma$ in \ref{Z2 action} to the ambient space $w\Proj^6$ of $Z$. With a little abuse of notation, we still call the restriction $\gamma$.

\begin{proposition} \label{Ztilda not terminal}
	The index 2 Fano 3-fold $\tilde{Z}:= Z\!/\!\gamma$ in codimension 3 is not terminal.
\end{proposition}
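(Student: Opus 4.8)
The plan is to adapt the local analysis of Lemmas \ref{Xtilde is quasismooth}--\ref{Xtilde has terminal sings}, exploiting the one structural feature that separates $Z$ from $X$: by construction $Z$ contains the plane $D = \Proj^2(1,b,c)$ (cut out by $I_D = \langle y_1,\dots,y_4\rangle$), whereas $X$ was forced to avoid the fixed locus $\Proj_{\text{even}}$. First I would restrict $\gamma$ to $w\Proj^6(1,b,c,d_1,\dots,d_4)$ and, patch by patch exactly as in the first Lemma, identify the quotient ambient space as $\Proj^6(2,b,c,d_1,\dots,d_4)$ with $\xi = x_1^2$, recording that $\Fix(\gamma) = \{x_1 = 0\} \cup \Proj_{\text{even}}$. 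Along the ramification divisor $\{x_1 = 0\}$ the involution is a reflection in the single direction $x_1$, so the quotient is smooth there; consequently every singularity of $\tilde Z$ beyond those inherited from $Z$ must be supported on $Z \cap \Proj_{\text{even}}$.

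The crucial point is that $P_{x_1} = (1:0:\cdots:0)$ lies in $D$, hence in $Z$, and also in $\Proj_{\text{even}}$, so $Z \cap \Proj_{\text{even}} \neq \emptyset$ --- in direct contrast to the equality $X \cap \Proj_{\text{even}} = \emptyset$ that underpinned the terminality of $\tilde X$. Since $x_1$ has weight $1$ the point $P_{x_1}$ has trivial stabiliser and, by quasismoothness of $Z$, is a smooth point of the $3$-fold. Passing to the chart $\mathcal U_{x_1}$ and renormalising $x_1 = 1$ after applying $\gamma$, the involution becomes $(u_2,u_3,v_1,\dots,v_4)\mapsto((-1)^b u_2,(-1)^c u_3,(-1)^{d_1}v_1,\dots,(-1)^{d_4}v_4)$. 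Two of the three tangent directions to $Z$ at $P_{x_1}$ are the directions $u_2,u_3$ of $D$, of parities $b,c$, and the third is the surviving normal direction $v_j$ of parity $d_j$; so analytically near the image of $P_{x_1}$ the quotient $\tilde Z$ is the cyclic singularity $\tfrac12(\overline b,\overline c,\overline{d_j})$, with $\overline{(\,\cdot\,)}$ denoting reduction modulo $2$.

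For a $\Z/2$-quotient of a smooth $3$-fold the Reid--Tai criterion makes the dichotomy transparent: the point is terminal exactly when the $\gamma$-fixed subspace of the tangent space is $0$-dimensional (giving $\tfrac12(1,1,1)$), it is smooth when that subspace is $2$-dimensional (a reflection, i.e.\ the branch divisor), and it is \emph{non-terminal} precisely when the fixed subspace is a curve, yielding $\tfrac12(1,1,0)$, a line of $A_1$-singularities. I would therefore aim to exhibit such a fixed curve inside $Z \cap \Proj_{\text{even}}$: when exactly one of $b,c$ is odd, the intersection $D \cap \Proj_{\text{even}}$ is already a whole $\Proj^1 \subset D$ through $P_{x_1}$, and along it the transverse action is $\tfrac12(1,1)$, so $\tilde Z$ is singular along a curve and cannot be terminal.

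The main obstacle is the bookkeeping needed to rule out the one benign configuration, namely $b,c$ and the normal weight $d_j$ all odd, in which $P_{x_1}$ would contribute only an isolated terminal $\tfrac12(1,1,1)$. Resolving this requires two case-dependent inputs: identifying, from the Tom or Jerry shape of the matrix $M$, which normal coordinate $v_j$ actually survives in $T_{P_{x_1}}Z$ and hence the parity $\overline{d_j}$; and, when $D\cap\Proj_{\text{even}}$ degenerates to the single point $P_{x_1}$ (the case $b,c$ both odd), locating the remaining non-terminal locus by analysing the rest of $Z\cap\Proj_{\text{even}}$, whose dimension is governed by the excess-intersection estimate $\dim(Z\cap\Proj_{\text{even}})\geq \dim Z + \dim\Proj_{\text{even}} - 6$. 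Everything else in the argument is uniform across the $34$ Hilbert series of Table \ref{Table TJ index 2}.
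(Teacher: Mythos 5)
Your overall strategy is the paper's: restrict $\gamma$ to $w\Proj^6$, observe that, unlike $X$, the 3-fold $Z$ meets the component $\Proj_{\text{even}}$ of $\Fix(\gamma)$ because it contains the unprojection plane $D$, and exhibit a curve of $\frac{1}{2}$ quotient singularities inside $D/\gamma$, which is incompatible with terminality. However, there is a genuine gap exactly where you flag ``the one benign configuration'': you never rule out the case in which both weights $b,c$ of $D=\Proj^2(1,b,c)$ are odd, so that $D\cap\Proj_{\text{even}}$ degenerates to the single point $P_{x_1}$ and could a priori contribute only an isolated terminal $\frac{1}{2}(1,1,1)$. Deferring this to ``case-dependent bookkeeping'' over the 34 Hilbert series, and to the excess-intersection bound on $Z\cap\Proj_{\text{even}}$ (which by itself forces no curve, since $\Proj_{\text{even}}$ may have small dimension), leaves the statement unproved.

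The paper closes this case uniformly with one arithmetic observation you are missing: $D$ contracts under unprojection to the Type I centre of the index 2 Fano, and terminal cyclic quotient singularities on an index 2 Fano 3-fold have the form $\frac{1}{r}(2,b,r-b)$ with $(2,r)=1$, hence $r$ odd. Since $b+(r-b)=r$ is odd, exactly one of the two weights transverse to the index direction is even; in particular your ``both odd'' configuration never occurs. Consequently $D/\gamma$ always contains a coordinate line $\Proj^1(2,\text{even})$ whose points carry a $\mu_2$ stabiliser, i.e.\ a curve of $\frac{1}{2}$ singularities on $\tilde Z$, and non-isolated singularities on a 3-fold are never terminal. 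With that single input your local $\frac{1}{2}(\overline b,\overline c,\overline{d_j})$ analysis at $P_{x_1}$ becomes unnecessary and the argument is uniform across all families.
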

\begin{proof}
	Consider the fixed locus $\Fix(\gamma) = \{x_1=0 \} \cup \Proj_{\text{even}}$ of $\gamma$, and its intersection with $Z$. The plane $D \cong \PP(1,b,c)$ is contracted via unprojection to the terminal singularity of type $\frac{1}{r}(1,b,c)$ in $X$, where $c=r-b$ and $(b,r)=1$. Thus, either $b$ or $c$ is even. 
	In the quotient we have that $\tilde{D}:= D\!/\!\gamma \cong \PP(2,b,c)$. 
	Therefore, $\tilde{Z}$ and $\Proj_{\text{even}}$ meet on $D$ along a line constituted by $\frac{1}{2}$ singularities.
\end{proof}

The phenomenon described in Proposition \ref{Ztilda not terminal} was already anticipated in \cite[Sections 6.4, 6.5]{ProkhorovReid}. 
There are only two Hilbert series corresponding to terminal index 2 codimension 3 Fano 3-folds: one is smooth, and is constructed in \cite{Iskovskih1} and \cite{TakaoFujita}. The other one was constructed by Ducat in \cite{ducat2018alaprokhreid}.

We would like to stress that $X$ is general in its Tom or Jerry formats, as in Sections \ref{Tom families}, \ref{Jerry families} (cf \cite[Section 3]{CampoSarkisov}, \cite[Section 4]{T&Jpart1}). Thus, provided the additional condition of invariance of $X$ under $\gamma$, $\tilde{X}$ is general.

\section{Tom families} \label{Tom families}

Let $X$ be a codimension 4 $\Q$-Fano 3-fold having at least one Type I centre, and suppose that $X$ is obtained by the corresponding Type I unprojection from a divisor $D$ inside a codimension 3 $\Q$-Fano 3-fold $Z$ in Tom format.

In this section we show a criterion to determine which Tom formats of $Z$ induce a double cover $X$ for the corresponding $\tilde{X}$ of Fano index 2. This produces deformation families for $\tilde{X}$. To this purpose, it is crucial to understand the geometry of $Z$, and of the nodes lying on the divisor $D \subset Z$. 
The following statement holds for both Tom and Jerry formats, and will also be used in Section \ref{Jerry families} below.  

\begin{lemma} \label{nodes not fixed}
	If a general codimension 3 Fano 3-fold $Z$ in either Tom or Jerry format is invariant under the $\Z/2\Z$ action $\gamma$, then the nodes on the divisor $D \subset Z$ are not fixed by $\gamma$. 
	In particular, they are pairwise-identified in the quotient $\tilde{Z}$, and $\#\{\text{nodes on } Z\}=2 \cdot\#\{\text{nodes on } \tilde{Z}\}$.
\end{lemma}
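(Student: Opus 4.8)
My plan is to reduce all three assertions to the single claim that no node on $D$ is fixed by $\gamma$: once this is known, the pairing of nodes and the factor $2$ in the count follow formally, so I would establish fixed-point-freeness first and obtain the count as a corollary.

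First I would pin down the nodes as a determinantal locus on $D\cong\Proj^2$ with coordinates $x_1,x_2,x_3$. Setting $y_1=\dots=y_4=0$, every entry $a_{ij}\in I_D$ vanishes and all five Pfaffians restrict to $0$ (confirming $D\subset Z$); in Tom$_k$ format the Pfaffian $\Pf_k$ lies in $I_D^2$, while the remaining four lie in $I_D$ and contribute nonzero linear terms in the normal coordinates $y_1,\dots,y_4$. These linear parts assemble into a $4\times 4$ matrix $L$ of forms on $D$, bilinear in the restricted free entries $\bar a_{ki}$ and in the linear-in-$y$ parts of the entries $a_{ij}\in I_D$. At $p\in D$ one has $\rk L(p)=3$ exactly when $Z$ is smooth at $p$ with $D$ a Cartier divisor there, whereas the nodes are the finitely many points with $\rk L(p)\le 2$. (A syzygy among the rows of $L$ with coefficients $\bar a_{ki}$ already forces $\rk L\le 3$ everywhere, which is why $D$ is a divisor in $Z$ rather than a transverse intersection.)

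The crux is to show that the finite node scheme $N=\{\rk L\le 2\}\subset D$ avoids $\Fix(\gamma)$. Since $\gamma$ preserves $Z$ and fixes $I_D$, it preserves $D$ and acts on $N$, and a node is $\gamma$-fixed if and only if it lies in $\Fix(\gamma)\cap D=\{x_1=0\}\cup(\Proj_{\text{even}}\cap D)$, a union of coordinate lines and points in $D$. I would use that $\gamma$-invariance assigns a sign $\epsilon_i\in\{\pm 1\}$ to each row so that $a_{ij}$ is $\gamma$-invariant (even in $x_1$) when $\epsilon_i\epsilon_j=+1$ and anti-invariant (odd in $x_1$, hence divisible by $x_1$) when $\epsilon_i\epsilon_j=-1$; the anti-invariant entries thus vanish identically on $\{x_1=0\}$. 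Restricting $L$ to each component of $\Fix(\gamma)\cap D$ and feeding in this parity collapse, I would verify that the ideal of $3\times 3$ minors of $L$ becomes the unit ideal there for a general $\gamma$-invariant $Z$, i.e. $\rk L\ge 3$ all along $\Fix(\gamma)\cap D$. The dimension count $\dim N+\dim(\Fix(\gamma)\cap D)=0+1<2=\dim D$ predicts disjointness for a general member, so the real work, and the step I expect to be the main obstacle, is to confirm that the $\gamma$-invariant Tom family genuinely moves the nodes transversally to the fixed locus and that no hidden obstruction from the parity pattern $(\epsilon_i)$ pins a node onto $\Fix(\gamma)$; this is where the concrete weights of each entry in Table \ref{Table TJ index 2} must be checked.

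Granting this, $\gamma$ acts on the finite set $N$ as a fixed-point-free involution, so $N$ is a disjoint union of orbits $\{p,\gamma(p)\}$ of size exactly $2$. Away from its branch locus $\Fix(\gamma)$ the quotient $Z\to\tilde Z$ is \'etale, so a $\gamma$-stable neighbourhood of such an orbit maps isomorphically to a neighbourhood of a single point of $\tilde Z$; hence each orbit descends to one ordinary double point of $\tilde Z$ and distinct orbits give distinct nodes. Conversely every node of $\tilde Z$ lies off the image of $\Fix(\gamma)$ --- along which $\tilde Z$ carries only the $\frac{1}{2}$-quotient singularities of Proposition \ref{Ztilda not terminal}, not nodes --- and therefore pulls back to a $\gamma$-orbit of nodes of $Z$. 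Thus the nodes of $\tilde Z$ are exactly $N/\gamma$, which gives $\#\{\text{nodes on }Z\}=2\cdot\#\{\text{nodes on }\tilde Z\}$.
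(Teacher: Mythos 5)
Your overall strategy matches the paper's: first show the nodes avoid $\Fix(\gamma)$, then deduce the pairing and the factor of $2$. The second half of your argument is fine and in fact tidier than the paper's. The paper counts by writing the node locus on $\tilde D$ as the vanishing of the $3\times 3$ minors of the restricted Jacobian, noting these equations involve $\xi$, and observing that lifting them to $Z$ replaces $\xi$ by $x_1^2$ so that each solution splits into two; your version (a fixed-point-free involution on a finite set, together with the quotient being \'etale away from $\Fix(\gamma)$, so that nodes of $\tilde Z$ are exactly $\gamma$-orbits of nodes of $Z$) packages the same fact more robustly and makes explicit why each orbit descends to a genuine node of $\tilde Z$ rather than to some other singularity.

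The genuine gap is in the first half, and you have flagged it yourself: you never actually prove that no node lies on $\Fix(\gamma)\cap D=\bigl(\{x_1=0\}\cup\Proj_{\text{even}}\bigr)\cap D$. You set up the determinantal description $N=\{\rk L\le 2\}$ and give the dimension count $0+1<2$, but you correctly observe that this only predicts disjointness if the $\gamma$-invariant subfamily of the Tom family actually moves the nodes off the fixed curve, and you defer that verification ("the real work\ldots is to confirm\ldots"). Since "the nodes are not fixed by $\gamma$" is precisely the first assertion of the lemma, the proposal as written does not prove it. The paper closes this step differently and much more briefly: it quotes \cite[Theorem 3.2]{T&Jpart1} for the fact that the nodes lie on $D$ and then deduces non-fixedness from the analysis of $\Fix(\gamma)\cap Z$ carried out in the proof of Proposition \ref{Ztilda not terminal} (where the intersection of $Z$ with $\Proj_{\text{even}}$ along $D$ is identified as the locus producing the $\frac{1}{2}$-quotient points of $\tilde Z$, not nodes), rather than by a genericity argument on the invariant family. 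One further small mismatch: you allow entries of $M$ to be anti-invariant (odd in $x_1$) according to a sign pattern $\epsilon_i\epsilon_j$, whereas the paper's notion of invariance throughout is that $x_1$ appears only with even powers in every entry; this does not break your argument, but it means your parity bookkeeping is solving a more general problem than the one the lemma is about.
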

\begin{proof}
	From \cite[Theorem 3.2, Lemma 3.1]{T&Jpart1}, the nodes of $Z$ only lie on the divisor $D$. 	
	They are given by the $3 \times 3$ minors of the Jacobian matrix $J\big|_Z$ restricted to $D$, i.e. $\bigwedge^3 J |_D = \underline{0}$. Their equations are not $\gamma$-invariant, because the entries of the $\frac{\partial}{\partial x_1}$ column of $J_Z$ are not $\gamma$-invariant. Thus, the nodes are not fixed by the action. 
	In addition, for $\tilde{D} \coloneqq D/\gamma$, the nodes on $\tilde{D} \subset \tilde{Z}$ are given by $\bigwedge^3 \tilde{J} |_{\tilde{D}} = \underline{0}$. Such equations depend on $\xi$. As a consequence, the nodes on $D \subset Z$ are pairwise identified by $\gamma$ in the quotient and the number of nodes on $Z$ is twice the number of nodes on $\tilde{Z}$.
\end{proof}

There are either one or two Tom deformation families for each $X$ in index 1 \cite{T&Jpart1,TJBigTable}.
For each of the 32 index 1 Fano 3-folds $X$ in the GRDB that are candidates to be double covers of just as many $\tilde{X}$ in index 2, there is exactly one Tom deformation family that realises the double cover. 
The Tom families coming from codimension 3 Tom formats having odd number of nodes are automatically excluded by Lemma \ref{nodes not fixed}. 

Conversely, it is also possible to prove the following lemma.
\begin{lemma} \label{from Z to Z2Z invariance}
	Let $Z$ be a codimension 3 terminal Fano 3-fold in Tom format having Fano index 1 such that the number of nodes on $D \subset Z$ is even. If a Type I unprojection $X$ of $Z$ has basket of singularities formed only by cyclic quotient singularities with odd order, then it is possible to realise $Z$ as $\gamma$-invariant in such a way that $\tilde{X}$ is terminal.
\end{lemma}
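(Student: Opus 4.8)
The plan is to produce a $\gamma$-invariant member of the Tom or Jerry family of $Z$ by a parity bookkeeping on the entries of the syzygy matrix $M$. The guiding principle is that $Z = \{\Pf_1 = \cdots = \Pf_5 = 0\}$ is $\gamma$-invariant as soon as each maximal Pfaffian $\Pf_i$ is $\gamma$-homogeneous, i.e. all its monomials carry the same power of $x_1$ modulo $2$: indeed $\gamma^*$ then sends each generator to $\pm$ itself and preserves the ideal $I_Z$. So first I would introduce the $\Z/2\Z$-charge of a monomial as the parity of its $x_1$-exponent and reduce the statement to a combinatorial condition on $M$.

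The reduction uses the complementary-index structure of the $5\times 5$ Pfaffians. Assign charges $\epsilon_1,\dots,\epsilon_5 \in \Z/2\Z$ to the rows and columns and demand that the entry $a_{ij}$ be charge-homogeneous of charge $\epsilon_i + \epsilon_j$. Each $\Pf_i$ is an alternating sum over the three perfect matchings of $\{1,\dots,5\}\setminus\{i\}$, and every such term is a product $a_{jk}a_{lm}$ with $\{j,k,l,m\} = \{1,\dots,5\}\setminus\{i\}$; hence all three terms carry the common charge $\big(\sum_t \epsilon_t\big) - \epsilon_i$, and $\Pf_i$ is automatically charge-homogeneous. Thus any admissible choice of charges $\epsilon_i$, together with charge-homogeneous entries of the correct weights, yields a $\gamma$-invariant $Z$; since $\gamma$ fixes $y_1,\dots,y_4$, the centre $D = \{y_1 = \cdots = y_4 = 0\}$ is preserved, and assigning the unprojection variable $s$ the charge forced by its equations propagates invariance to $X$.

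It remains to choose the charges $\epsilon_i$ compatibly with the weight matrix $(m_{ij})$, the Tom$_k$ or Jerry$_{kl}$ membership of Definition \ref{TJ definition}, and enough generality to keep $Z$ quasismooth with the prescribed basket. This is where the two hypotheses enter. The row-grading $\beta_i$ underlying the Pfaffian format ($m_{ij} = \beta_i + \beta_j$ up to the common shift) dictates the natural choice $\epsilon_i \equiv \beta_i$, and the assumption that the basket of $X$ consists only of odd-order cyclic quotient singularities pins down the parities of the weights at the singular coordinate points --- equivalently it forces $X \cap \Proj_{\text{even}} = \emptyset$ as in Lemma \ref{Xtilde has terminal sings} --- so that for each entry a \emph{general} charge-$(\epsilon_i+\epsilon_j)$ polynomial of weight $m_{ij}$ exists, also when the entry is constrained to lie in $I_D$. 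The even-number-of-nodes hypothesis is what makes this subfamily genuinely non-degenerate: by the forward direction of Lemma \ref{nodes not fixed} an invariant $Z$ must have its nodes freely paired by $\gamma$, and the parity of the node count is the only obstruction to realising that pairing, so evenness guarantees that a quasismooth $\gamma$-invariant member exists rather than one forced to acquire a fixed, hence non-nodal, point on $D$.

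The main obstacle I expect is precisely this last existence/generality step: verifying that imposing charge-homogeneity does not shrink the Tom or Jerry family below what is needed to retain quasismoothness and the correct singular basket. Concretely one must check, format by format, that the positions forced into $I_D$ are charge-compatible with a single choice of $(\epsilon_i)$ and that each graded linear system of entries still moves enough to clear the quasismoothness loci; I would organise this as a finite case analysis across the weight data recorded in Table \ref{Table TJ index 2}, using the odd-order and even-node conditions to rule out the bad parities uniformly rather than checking each instance numerically.
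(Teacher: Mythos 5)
Your proposal is correct in outline but takes a more structured route than the paper. The paper's own proof is very short: it observes that evenness of the number of nodes is necessary because $\gamma$ pairs the nodes on $D$ (Lemma \ref{nodes not fixed}), that the odd-order condition on the basket of $X$ is what keeps the quotient terminal (as in Lemma \ref{Xtilde has terminal sings}), and then simply asserts that there are no further obstructions, deferring the actual $\gamma$-invariant realisation of $Z$ to the explicit matrices of Section \ref{examples and codes}. Your $\Z/2\Z$-charge bookkeeping on the syzygy matrix --- assigning charges $\epsilon_i$ to rows and noting that each maximal Pfaffian, being a sum over perfect matchings of the complementary four indices, is automatically charge-homogeneous --- is a genuine addition: it explains structurally why invariance can be imposed entry-by-entry rather than verified on the five equations. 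Two caveats. First, for the quotient to be obtained by the substitution $\xi = x_1^2$ as the paper does, each Pfaffian must be honestly invariant rather than anti-invariant; this forces all $\epsilon_i$ to be equal, i.e.\ every entry of $M$ must contain $x_1$ only to even powers (which is what the paper's examples do), whereas your ``natural choice'' $\epsilon_i \equiv \beta_i$ would produce anti-invariant Pfaffians whenever the row weights have mixed parity. Second, the existence/generality step you flag at the end is the real content, and neither you nor the paper closes it abstractly: the paper handles it by the case-by-case analysis of Sections \ref{Tom families} and \ref{Jerry families} and the failure list in Subsection \ref{Jerry failure}, which is exactly where forcing even powers of $x_1$ can collapse entries to $0$ or make them proportional and destroy the nodal/quasismooth structure. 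So your plan is viable and arguably better organised, but the finite case check you postpone is precisely where the paper does its work.
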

\begin{proof}
	In \cite{T&Jpart1} the authors provide a \verb|Magma| code (available at \cite{grdb} $\rightarrow$ Downloads) that automatises the check for failure of Tom and Jerry formats in the sense of \cite[Section 5]{T&Jpart1}. The code essentially follows the steps of the proof of \cite[Theorem 3.2]{T&Jpart1}, as illustrated in \cite[Section 8]{T&Jpart1}. 
	We run the code on the codimension 3 Fano 3-folds $Z$ from which we obtain candidate double covers $X$ for $\tilde{X}$, imposing that $x_1$ appears only with even powers, i.e. replacing $x_1$ with $\xi$.
	
	This shows that only the Tom formats having even number of nodes on $D \subset Z$ descend to Fano index 2 in codimension 4. Moreover, it is needed that $Z$ is in Tom format simply because these hypotheses are not enough when considering Jerry formats. Indeed, there are Jerry formats that satisfy the conditions on basket and nodes of Lemma \ref{from Z to Z2Z invariance} that do not descend to Fano index 2 (see for instance \#11104 Jerry$_{12}$ and the other families mentioned in Subsection \ref{Jerry failure}). 
	Once a successful format is found, any general choice of entries of the matrix $M$ would still work (cf \cite[Section 8]{T&Jpart1}). 
	
	The hypotheses on the basket of singularities of $Z$ ensures that $\tilde{X}$ is terminal (cf proof of Lemma \ref{Xtilde has terminal sings} and \cite[Lemma 1.2 (3)]{SuzukiIndexBound}).
\end{proof}
Recall from the proof of Lemma \ref{Xtilde has terminal sings} that $\Fix(\gamma) = \{x_1=0 \} \cup \Proj_{\text{even}}$, and that $X \cap \{x_1=0 \}$ contains all the cyclic quotient singularities of $X$. Thus, the basket of $X$ is fixed by $\gamma$. Therefore, it is important to observe that the following situations do not occur (cf \cite[Lemma 1.2 (3)]{SuzukiIndexBound}): a singularity of even order $2n$ on $X$ descends to a singularity of order $4n$ on $\tilde{X}$; the singularities of even order on $X$ are pairwise identified on $\tilde{X}$. 
This, together with Lemmas \ref{nodes not fixed}, \ref{from Z to Z2Z invariance}, therefore proves the following theorem.

\begin{theorem} \label{criterion for Tom}
	A general codimension 3 terminal Fano 3-fold $Z$ of index 1 in Tom format can be realised as invariant under $\gamma$ if and only if the number of nodes on $D \subset Z$ is even and the basket of its Type I unprojection $X$ exclusively contains cyclic quotient singularities of odd order.
\end{theorem}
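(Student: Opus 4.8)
The plan is to prove the biconditional by assembling the results already established for $Z$, $X$, and their quotients, so that the two named conditions appear respectively as a consequence of Lemma~\ref{nodes not fixed} and of the terminality analysis behind Lemma~\ref{Xtilde has terminal sings} and Proposition~\ref{Ztilda not terminal}. The reverse implication will be essentially a restatement of Lemma~\ref{from Z to Z2Z invariance}, so the substance of the argument lies in the forward direction, where I must extract \emph{both} necessary conditions from the single hypothesis that $Z$ admits a $\gamma$-invariant realisation.

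For the forward implication, suppose a general $Z$ in Tom format is realised as invariant under $\gamma$. First I would invoke Lemma~\ref{nodes not fixed}: since the nodes on $D \subset Z$ are then pairwise identified by $\gamma$, their total number equals $2\cdot\#\{\text{nodes on }\tilde{Z}\}$ and is therefore even, which yields the first condition immediately. For the second, I would pass to the unprojection $X$ and its quotient $\tilde{X} = X/\gamma$; for the construction to deliver a genuine index $2$ Fano $3$-fold, $\tilde{X}$ must be terminal (Lemma~\ref{Xtilde has terminal sings}). The key geometric input is that every cyclic quotient singularity of $X$ sits at a coordinate point other than $P_{x_1}$, hence lies in $\{x_1 = 0\} \subset \Fix(\gamma)$ and is fixed pointwise; moreover the reweighting $\Proj^7(1,b,c,d_1,\dots,d_4,r) \leadsto \Proj^7(2,b,c,d_1,\dots,d_4,r)$ only alters the weight carried by $x_1$, leaving the weights of these coordinate points unchanged. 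Consequently the order of each basket singularity is preserved in $\tilde{X}$, so an even order on $X$ would persist as an even order on $\tilde{X}$. This contradicts the classification of terminal singularities on an index $2$ Fano $3$-fold recalled in Proposition~\ref{Ztilda not terminal}, namely $\frac{1}{r}(a,b,r-b)$ with $r$ odd; hence the basket of $X$ contains only odd-order cyclic quotient singularities.

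For the reverse implication I would simply apply Lemma~\ref{from Z to Z2Z invariance}: the even node count is the necessary combinatorial condition for the pairwise identification to be consistent, the odd-order basket is exactly what guarantees that the resulting $\tilde{X}$ is terminal, and the lemma asserts that no further constraint on $Z$ is needed to exhibit an explicit invariant realisation. I expect the main obstacle to be the bookkeeping in the forward direction: making fully rigorous the claim that a fixed cyclic quotient singularity keeps its order under the combined operation of quotienting by $\gamma$ and rescaling the ambient weight of $x_1$, rather than being doubled or quadrupled. This requires working in the local weighted chart at each coordinate point $P_{x_i}$ with $i \neq 1$ and checking that the $\Z/2\Z$ action is absorbed by the reweighting $\xi = x_1^2$ rather than enlarging the local cyclic group; once this is verified, the parity dichotomy between odd-order (terminal) and even-order (non-terminal) singularities closes the argument.
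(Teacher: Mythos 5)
Your proposal is correct and follows essentially the same route as the paper: the forward direction combines Lemma~\ref{nodes not fixed} (pairwise identification of nodes forces an even count) with the observation that the basket of $X$ lies in $\{x_1=0\}\subset\Fix(\gamma)$ and is therefore fixed pointwise, so singularity orders are neither doubled nor pairwise identified and an even order would descend to a non-terminal point on the index~2 quotient; the reverse direction is exactly Lemma~\ref{from Z to Z2Z invariance}. The local-chart bookkeeping you flag as the main obstacle is precisely the point the paper disposes of by noting that the two bad scenarios (order $2n\leadsto 4n$, or pairwise identification of even-order points) do not occur.
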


As summarised in Table \ref{Table TJ index 2}, each of the 32 codimension 4 index 1 Fano 3-folds that are candidates to be double covers, have one and only one Tom deformation family as described in Theorem \ref{criterion for Tom}. Except for the Hilbert series with GRDB ID \#24078, the only Tom deformation family that admits the quotient by $\gamma$ is the first Tom family as in \cite{TJBigTable}. Instead, the Fano \#24078 only admits its second Tom family Tom$_5$; the deformation family Tom$_5$ of \#24078 has Picard rank 2 by \cite{brownP2xP2}.

\section{Jerry families} \label{Jerry families}

In this section we examine codimension 4 terminal Fano 3-folds $X$ of Jerry type, and we determine which Jerry families give rise to a deformation family for $\tilde{X}$ in index 2.

Suppose that $Z$ is a terminal codimension 3 Fano 3-fold in Jerry format having at least one Type I centre, and call $P$ the weight of the pivot entry (see Definition \ref{TJ definition}). The criterion to determine which Jerry deformation families of $X$ descend to $\tilde{X}$ depends on whether the following condition is satisfied or not.

\begin{condition} \label{condition c}
	There exists a generator $y_k$ of the ideal $I_D$ whose weight is equal to $P$.
\end{condition}
What has been discussed for Tom families still holds here, but some care is needed with regards to some features of the Jerry formats. 
In particular, the structure of the proof of Theorem \ref{criterion for Tom} can also be replicated in the Jerry case. However, there are some Jerry families, listed in Subsection \ref{Jerry failure}, that satisfy the hypotheses of Theorem \ref{criterion for Tom} but do not descend to Fano index 2. 
We therefore have the following theorem, proved in the next subsection.

\begin{theorem} \label{criterion for Jerry}
	A general codimension 3 terminal Fano 3-fold $Z$ of index 1 in Jerry format (except for \#24077 Jerry$_{12}$) can be realised as invariant under $\gamma$ if and only if the following statements are simultaneously satisfied: the number of nodes on $D \subset Z$ is even; the basket of its Type I unprojection $X$ exclusively contains cyclic quotient singularities of odd order; Condition \ref{condition c} is fulfilled; the weight $P$ of the pivot entry is even.
\end{theorem}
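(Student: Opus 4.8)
The plan is to establish Theorem \ref{criterion for Jerry} by mirroring the logic of the Tom case (Theorem \ref{criterion for Tom}) and then isolating the two extra conditions that are genuinely new to Jerry formats, namely Condition \ref{condition c} and the parity of the pivot weight $P$. I would first argue the necessity and sufficiency of the two conditions shared with the Tom case. The evenness of the number of nodes on $D \subset Z$ is forced because, by the argument in Lemma \ref{nodes not fixed}, $\gamma$ identifies the nodes pairwise, so an odd count obstructs a $\gamma$-invariant realisation; this argument is format-agnostic and carries over verbatim. Likewise, the requirement that the basket of the Type I unprojection $X$ consist only of odd-order cyclic quotient singularities is exactly what Lemma \ref{Xtilde has terminal sings} and Lemma \ref{from Z to Z2Z invariance} demand in order that $\tilde X$ be terminal: an even-order singularity on $X$ would descend to a non-terminal point on $\tilde X$ (cf.\ \cite[Lemma 1.2 (3)]{SuzukiIndexBound}). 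So far nothing depends on the format.

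Next I would bring in the pivot entry, which is where the Jerry case diverges from Tom. In Jerry$_{kl}$ format the pivot $a_{kl}$ is required to lie in $I_D = \langle y_1,\dots,y_4\rangle$, and it is an entry of weight $P$ occupying a distinguished position in $M$. For $Z$ to be $\gamma$-invariant, every entry of $M$ must be expressible using only even powers of $x_1$, so that after setting $\xi := x_1^2$ the Pfaffian equations descend to $\Proj^7(2,b,c,d_1,\dots,d_4,r)$. The key observation is that $a_{kl}$, being a single generator of $I_D$ up to a unit (or an $I_D$-linear combination), inherits its parity behaviour under $\gamma$ directly from the weights available in $I_D$: I would show that one can populate the pivot with a $\gamma$-invariant form precisely when $I_D$ supplies a generator $y_k$ whose weight matches $P$, which is Condition \ref{condition c}, and when that weight is even, so that the pivot contribution to the descended equations is a genuine $\xi$-monomial rather than something forced to carry an odd power of $x_1$. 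This is the step I expect to be the main obstacle: one must check that no alternative assignment of $\gamma$-invariant entries to the pivot can circumvent a failure of Condition \ref{condition c} or an odd $P$, i.e.\ that these are not merely sufficient but genuinely necessary. The heart of this is a weight bookkeeping argument on the matrix $M$, tracking which positions can be filled by even-weight invariant forms once the overall grading of the Pfaffians is fixed.

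Finally I would assemble the four conditions into the biconditional. For sufficiency, given all four conditions I would exhibit an explicit $\gamma$-invariant $M$: fill the pivot with a suitable multiple of the even-weight generator $y_k$ from Condition \ref{condition c}, fill the remaining $I_D$-entries dictated by the Jerry$_{kl}$ pattern with invariant forms (possible because the node count is even), and verify via Lemma \ref{from Z to Z2Z invariance} that the resulting $Z$ is invariant and that its unprojection $X$ has the required odd-order basket, so that $\tilde X$ is terminal of index 2 by Lemmas \ref{index of Xtilde}, \ref{Xtilde is quasismooth}, \ref{Xtilde has terminal sings}. For necessity I would run each implication in reverse: a $\gamma$-invariant realisation forces the node count even (Lemma \ref{nodes not fixed}), forces the basket odd (terminality of $\tilde X$), and forces both Condition \ref{condition c} and even $P$ by the pivot weight analysis above. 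The exclusion of \#24077 Jerry$_{12}$ would be handled as a flagged exceptional case, presumably because there the pivot geometry degenerates in a way the general argument does not cover, and I would treat it by direct inspection rather than by the general mechanism.
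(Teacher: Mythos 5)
Your treatment of the two format-agnostic conditions (even node count via Lemma \ref{nodes not fixed}, odd-order basket via terminality of $\tilde X$) matches the paper, which indeed just imports that part from Theorem \ref{criterion for Tom}. The gap is in your handling of the pivot weight, which is the genuinely Jerry-specific step. You locate the obstruction at the pivot entry itself, claiming that $a_{kl}$ can be populated with a $\gamma$-invariant form precisely when $I_D$ has a generator of weight $P$ and $P$ is even. But this cannot be the mechanism: the pivot lies in $I_D=\langle y_1,\dots,y_4\rangle$, and a generator $y_k$ of \emph{odd} weight is already $\gamma$-invariant since it does not involve $x_1$ at all. There is no parity obstruction at the pivot position. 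The paper's actual argument is that when $P$ is odd, homogeneity of the maximal pfaffians forces two of the three entries of $M$ \emph{not} in $I_D$ to have odd weight; the requirement that $x_1$ appear only to even powers then often makes it impossible to fill one of those entries with a polynomial outside $I_D$ of the right degree, forcing it to be zero, and row/column operations kill further entries. The conclusion is not that $Z$ fails to be $\gamma$-invariant per se, but that the resulting matrix degenerates into one of the known failure modes of \cite[Section 5]{T&Jpart1}, producing non-reduced singularities on $D$ so that the format does not yield a valid deformation family.

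Moreover, the paper does not close the argument with a general weight-bookkeeping lemma of the kind you anticipate as ``the main obstacle''; it completes the proof by an explicit case-by-case inspection (Subsection \ref{Jerry failure}) of the seven Jerry formats that pass the node and Condition \ref{condition c} tests but have odd pivot weight, matching each to a specific failure type in \cite[Section 5.2]{T&Jpart1}. Your proposal has no substitute for this finite check, and without it the necessity of ``$P$ even'' is unproven. Finally, your guess about the excluded case \#24077 Jerry$_{12}$ points the wrong way: it is not a case where the general mechanism degenerates, but a genuine counterexample to necessity --- it has even node count, satisfies Condition \ref{condition c}, has pivot weight $1$, and nevertheless admits an invariant realisation (its ambient space $\Proj^6(1^6,2)$ has enough weight-one coordinates to fill $M$ without forcing the fatal zeros), which is precisely why it must be carved out of the statement.
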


\subsection{Proof of Theorem \ref{criterion for Jerry} and failure of Jerry formats} \label{Jerry failure}

The first check we do in order to exclude a Jerry format is to verify that it has even number of nodes and the basket of singularities of $Z$ exclusively contains singularities of odd order. 
Afterwards, we proceed with checking the properties of the pivot entry. The check on the nodes excludes the majority of the failing Jerry formats. Only a few are left, which present different reasons for failure. 

The presence of a pivot entry with even weight is crucial for having reduced singularities on $D$. If $P$ is odd, of the three entries of $M$ that are not in $I_D$ two have odd weight and one has even weight for homogeneity of the maximal pfaffians. Imposing that $x_1$ appears only with even powers constitutes a further constraint that is often not compatible with the Jerry format: in other words, it can happen that, for a certain entry of $M$ not in $I_D$, it is not possible to find a polynomial not in $I_D$ of the right degree; this therefore forces a 0 in that entry (see below). Furthermore, row/column operations on $M$ show that other entries can be made 0. Such configurations of entries of $M$ cause $Z$ to have non-reduced singularities (cf \cite[Section 5]{T&Jpart1}).

We say that a certain format for a codimension 3 Fano 3-fold $Z$ \textit{fails} if the deformation family of $Z$ associated to that format does not enjoy the following properties: terminal, nodal on $D$ with reduced singularities. The reasons for formats' failures is extensively described in \cite[Section 5]{T&Jpart1}. 
In this paper, we have additional reasons for failure: not having even number of nodes on $D$, not satisfying Condition \ref{condition c} (in the Jerry case), not having the pivot entry with even weight. 
Except for \#24078, all Jerry formats for which $Z$ has even number of nodes on $D$ also satisfy Condition \ref{condition c}.
Here we want to discuss the Jerry formats for which the conditions \ref{condition c} and the even nodes are satisfied, but do not have the pivot entry with even weight. 

There are exactly seven formats that fall in this case. We run an explicit check on all of them using the reasons for failure in \cite[Section 5]{T&Jpart1}. 
These failing formats all give rise to non-reduced singularities on $D$. 
We specify the Type I centre in case of ambiguity, and we refer to \cite{TJBigTable} for the grading of the matrix $M$. 
\vspace{0.5em}

\paragraph{\textbf{\#1401, $\frac{1}{7}(1,2,5)$, Jerry$_{24}$}}
The ambient space of $Z$ in Jerry$_{24}$ format is $\Proj^6(1,2,3,3,4,5,5)$ with homogeneous coordinates $x_1,x_2,y_1,y_2,y_3,y_4,x_3$ respectively, and the ideal $I_D$ is generated by $y_1,y_2,y_3,y_4$. The entry $a_{13}$ has degree 3, and we want the coordinate $x_1$ to appear only with even powers in order to perform the double cover construction in Section \ref{double covers}. Thus, there is no way to fill entry $a_{13}$ with an homogeneous polynomial of degree 3 not in $I_D$, so $a_{13}$ must be 0. 
On the other hand, the entries $a_{14}, a_{23}$ must both be equal to $y_3$, as there is no other polynomial of degree 4 in $I_D$ in the available coordinates. 
Thus, the format Jerry$_{24}$ fails for the reason in \cite[Section 5.2 (4)]{T&Jpart1}.

\paragraph{\textbf{\#1401, Jerry$_{45}$}}
For $Z$ in Jerry$_{45}$ the ambient space is $\Proj^6(1,2,3,3,4,5,7)$ with homogeneous coordinates $x_1,x_2,x_3,y_1,y_2,y_3,y_4$ respectively. The entries $a_{12}, a_{13}$ of $M$ are both equal to $x_3$ because there is no other polynomial of degree 3 not in the ideal $I_D$ that is only in $x_1,x_2,x_3$. 
In addition, the entry $a_{34}$ can be made 0 via row/column operations on $M$. Thus, Jerry$_{45}$ fails because of \cite[Section 5.2 (4)]{T&Jpart1}.

\paragraph{\textbf{\#1405, Jerry$_{24}$}}
Since there are only two generators of $I_D$ in degree 5, either $a_{25}$ or $a_{34}$ can be made 0 via row/column operations on $M$. Also $a_{14} = a_{23}$ because there is only one choice for a degree 4 polynomial in $I_D$.
Hence, Jerry$_{24}$ fails due to \cite[Section 5.2 (4)]{T&Jpart1}.

\paragraph{\textbf{\#4987, Jerry$_{12}$}}
The entry $a_{23}$ is 0 after row/column operations, and $a_{34}=0$ because there is no degree 7 polynomial not in $I_D$. 
So, Jerry$_{12}$ fails due to \cite[Section 5.2 (3)]{T&Jpart1}.

\paragraph{\textbf{\#11104, Jerry$_{12}$}}
The entry $a_{23}$ is 0 after row/column operations, and $a_{34}=0$ because there is no degree 3 polynomial not in $I_D$. 
So, Jerry$_{12}$ fails due to \cite[Section 5.2 (3)]{T&Jpart1}.

\paragraph{\textbf{\#11123, Jerry$_{12} \bullet_{15}$}}
The entry $a_{15}$ is already 0 in this format. Moreover, $a_{34} = a_{35}$ because there is only one choice for a degree 3 polynomial in $I_D$. Thus, Jerry$_{12} \bullet_{15}$ fails due to \cite[Section 5.2 (4)]{T&Jpart1}.

\paragraph{\textbf{\#11123, Jerry$_{24} \bullet_{25}$}}
The entry $a_{25}$ is already 0. In addition, $a_{15}$ can be made 0 by row/column operations. Thus, Jerry$_{24} \bullet_{25}$ fails due to \cite[Section 5.2 (3)]{T&Jpart1}.
\vspace{0.5em}

The Jerry formats above that do not satisfy all the conditions of Theorem \ref{criterion for Jerry} simultaneously: for all of them it is possible to find a reason for failure as in \cite[Section 5]{T&Jpart1}. 
All the other Jerry formats satisfying the conditions of Theorem \ref{criterion for Jerry} simultaneously do not present any reason for failure. Running the \verb|Magma| code implemented in \cite{T&Jpart1} and mentioned in the proof of Lemma \ref{from Z to Z2Z invariance} confirms this. This concludes the proof of Theorem \ref{criterion for Jerry}.

\begin{remark}
	The Fano 3-fold with GRDB ID \#24078 of Jerry type is the only family that does not follow Theorem \ref{criterion for Jerry}. Indeed, its codimension 3 source \#24077 in Jerry$_{12}$ format has even number of nodes and satisfies Condition \ref{condition c}, but the weight of its pivot is 1, and its basket is $\mathcal{B}_{Z} \coloneqq \{ \frac{1}{2}(1,1,1) \}$. 
	However, employing a code analogous to the ones in Section \ref{examples and codes}, it is still possible to produce explicit equations for its corresponding index 2 Fano \#40933.
	The ambient space of \#24077 is $\Proj^6(1^6,2)_{x_i,y}$, and suppose that $\gamma$ changes sign to the coordinate $x_1$, so $\xi \coloneqq x_1^2$. This Fano 3-fold is peculiar because it is the only index 1 double cover whose ambient space has enough coordinates of weight 1 to fill the entries of $M$ in in Jerry$_{12}$ format appropriately. 
	Suppose $I_D \coloneqq \langle x_3,x_4,x_5,x_6 \rangle$. The grading of $M$ is the following, and, for $q_1,q_2,q_3 \in I_D$ homogeneous polynomials of degree 1, 2, 2 respectively, it can be filled as
	\begin{center}
		\begin{tabular}{c c c}
			$\left(
			\begin{array}{c c c c}
			1 & 1 & 1 & 2 \\
			& 1 & 1 & 2 \\
			& & 1 & 2 \\
			& & & 2
			\end{array}
			\right) $
			& $\leadsto$
			&
			$\left(
			\begin{array}{c c c c}
			x_3 & x_4 & x_5 & q_1 \\
			& x_6 & q_2 & q_3 \\
			& & x_2 & y \\
			& & & \xi
			\end{array}
			\right) = M \; .$
		\end{tabular}
	\end{center}
	This shows that only $q_2$ could be made 0 after row/column operations on $M$, and that there is no further row/column elimination possible. Thus, \#24077 does not fall into the failure description of \cite[Section 5]{T&Jpart1}.
\end{remark}

\section{Proof of Main Theorem: Type I unprojections}

The proof of Theorem \ref{Main Theorem} relies on the possibility to realise $X$ in codimension 4 and index 1 as invariant under the action $\gamma$. 
The construction of $X$ starts from $Z$ in codimension 3. We gave necessary and sufficient conditions to realise $Z$ as invariant under $\gamma$ in Theorems \ref{criterion for Tom} and \ref{criterion for Jerry} for both types of deformation families of $Z$. 
It is possible to verify that the invariance under~$\gamma$ is maintained in $X$, that is, performing the unprojection does not affect the $\gamma$-invariance.

The equations of $X$ are of the form 
\begin{equation} \label{equations of X}
X = \{ \Pf_i(M) = s y_j - g_j = 0 \; | \; i=0,\dots,4 \text{ and } j=1,\dots,4 \}
\end{equation}
for $g_j=g_j(x_1,x_2,x_3,y_1,y_2,y_3,y_4)$ an homogeneous polynomial of degree $r+d_j$. We call $s y_j - g_j = 0$ the four \textit{unprojection equations} (cf \cite[Definition 1.2]{PapadakisReidKM}).  
In the following, we refer to \cite[Section 5]{PapadakisComplexes} and \cite[Appendix]{CampoSarkisov} for notation and definitions. 

First, let us consider the case of Tom formats and, to fix ideas, suppose that the matrix $M$ is in Tom$_1$ format (the other Tom formats are analogous). Then, $M$ is looks like
\begin{equation*} \label{general form T1} M =
\left(
\begin{array}{c c c c}
p_1 & p_2 & p_3 & p_4 \\
& a_{23} & a_{24} & a_{25} \\
& & a_{34} & a_{35} \\
& & & a_{45}
\end{array}
\right) 
\end{equation*}
where the $a_{ij} \in I_D$ are polynomials of the form $ a_{ij} := \sum_{k=1}^{4} \alpha_{ij}^k y_k $ for some polynomial coefficients~$\alpha_{ij}^k$, and $p_j \notin I_D$. Assume that all the entries of $M$ are $\gamma$-invariant. Here we calculate $\Pf_i$ by excluding the $(i+1)$-th row and the $(i+1)$-th column of $M$ for $i \in \{0,1,2,3,4\}$. 
For $i \not= 0$, we can define the matrix $ Q = \left( \Pf_i(N_j) \right)_{i,j=1 \dots 4}$ where 
\begin{equation*}
N_i =
\left(
\begin{array}{c c c c}
p_1 & p_2 & p_3 &  p_4 \\
& \alpha^i_{23} & \alpha^i_{24} & \alpha^i_{25} \\
& & \alpha^i_{34} & \alpha^i_{35} \\
& & & \alpha^i_{45}
\end{array}
\right) 
\end{equation*}
and $\alpha^i_{kl}$ is the coefficient of $y_i$ in $a_{kl}$. The polynomial $g_1$, and analogously the other $g_2,g_3,g_4$, is defined as 
\begin{equation*} \label{def of unproj in Pap}
g_1 = \frac{1}{p_1} \det \left(
\begin{array}{c c c}
\Pf_2(N_2) & \Pf_2(N_3) & \Pf_2(N_4) \\
\Pf_3(N_2) & \Pf_3(N_3) & \Pf_3(N_4) \\
\Pf_4(N_2) & \Pf_4(N_3) & \Pf_4(N_4)
\end{array}
\right) 
\end{equation*}
and the determinant on the right-hand side is divisible by $p_1$ (cf Lemma 5.3 of \cite{PapadakisComplexes}). 

Now, if $Z$ in Tom$_1$ format is $\gamma$ invariant then all the entries of $M$ are $\gamma$-invariant. Therefore, the same holds for the matrices of coefficients $N_i$. Also, it is clear that $\Pf_i(N_j)$ and the determinant in \eqref{def of unproj in Pap} are all sums and multiplications of $\gamma$-invariant polynomials. Thus, $X$ is $\gamma$-invariant, and it is possible to perform the $\Z/2\Z$ quotient of $X$ to obtain $\tilde{X}$. 

By Lemmas \ref{index of Xtilde}, \ref{Xtilde has terminal sings} and Theorem \ref{Xtilde in grdb}, we have just constructed a Tom type deformation family of a codimension 4 Fano 3-fold having Fano index 2.

For $M$ of Jerry format, we proceed analogously to the Tom case by retracing the construction of the unprojection equations in \cite[Section 5.7]{PapadakisComplexes}. 
In this case, to fix ideas suppose that $M$ is in Jerry$_{12}$ format, so it is of the form
\begin{equation*} \label{general form J12} M =
\left(
\begin{array}{c c c c}
c & a_1 & a_2 & a_3 \\
& b_1 & b_2 & b_3 \\
& & p_1 & p_2 \\
& & & p_3
\end{array}
\right) 
\end{equation*}
where $a_i = \sum_{k=1}^{4} \alpha_i^k y_k$, $b_i = \sum_{k=1}^{4} \beta_i^k y_k$, and $c = \sum_{k=1}^{4} \gamma^k y_k$ are all polynomials in $I_D$, and $p_i \notin I_D$ for $i=1,2,3$ (see \cite[(5.9), Section 5.7]{PapadakisComplexes}). Again, assume that $Z$ is $\gamma$-invariant, so each of the entries of $M$ are too. 

Define the $3 \times 4$ matrix $Q$ to have entries
\begin{align*}
Q_{1k} &\coloneqq \beta_1^k p_3 - \beta_2^k p_2 + \beta_3^k p_1 \\
Q_{2k} &\coloneqq \alpha_1^k p_3 - \alpha_2^k p_2 + \alpha_3^k p_1 \\
Q_{3k} &\coloneqq \gamma^k p_3 - \gamma^k p_2 + \gamma^k p_1 
\end{align*}
and define the polynomials $h_i$ as the determinant of the $3 \times 3$ minor of $Q$ after removing the $i$-th column. Observe that each $h_i$ is $\gamma$-invariant because it is a determinant of a $\gamma$-invariant matrix. 

By \cite[Lemma 5.11]{PapadakisComplexes} we have that for each $h_i$ there exist two polynomials $K_i, L_i$ such that $h_i = p_3 K_i + (a_2 p_2 - a_3 p_1) L_i$. These two polynomials are defined implicitly by considering the matrix $Q$ restricted to $p_3=0$. 
The polynomial $L_i$ is the determinant of the rightmost matrix appearing in the proof of \cite[Lemma 5.11]{PapadakisComplexes}, and it is $\gamma$-invariant because it is a determinant of a $\gamma$-invariant matrix; the polynomial $K_i$ is the remainder. Since $(a_2 p_2 - a_3 p_1), p_3, L_i, h_i$ are $\gamma$-invariant, so is $K_i$. 

By \cite[Equation (5.12) and Lemma 5.12]{PapadakisComplexes} we have that the polynomials $g_i = K_i + a_1 L_i$ are there right-hand side of the unprojection equations in \eqref{equations of X}; in particular, they are $\gamma$-invariant. 
Thus, as before, $X$ is $\gamma$-invariant, and we can obtain $\tilde{X}$ taking the quotient of $X$ by the action $\gamma$. Such $\tilde{X}$ is in GRDB by Lemmas \ref{index of Xtilde}, \ref{Xtilde has terminal sings} and Theorem \ref{Xtilde in grdb}.

The discussion about the two double cover Fano 3-folds arising from Type II$_2$ unprojections is contained in the next Section \ref{Type II}. In this case we explicitly build the $\gamma$-invariant equations of $\tilde{X}$, and the conclusion follows analogously as the Tom and Jerry cases.

\section{Proof of Main Theorem: Type II$_2$ unprojections}\label{Type II}

The two Hilbert series \#39569 and \#39607 have candidate double covers, \#512 and \#872 respectively, that do not have any Type I centre. Indeed, \#512 and \#872 are obtained as Type II$_2$ unprojections of the corresponding Fano hypersurfaces $Y$ \cite[Section 5.2]{Taylor}. 
The Tom and Jerry construction is not appropriate in this case, but different deformation families can still arise. For the theory of Type II unprojections we follow ideas contained in \cite{PapadakisTypeII1,PapadakisGeneralTheoryUnproj,PapadakisTypeII,PapadakisReidKM,Taylor} and in the paper in preparation \cite{TaylorTypeII}. In particular, in this Section we follow closely \cite{PapadakisTypeII} for the general strategy, and \cite{Taylor} for part of the explicit approach. The calculations presented in this Section
are supported by computer algebra and inspired by the above works; as for now, there is no general theory of Type II$_n$ unprojections. 
Here we show that Theorem \ref{Main Theorem} still holds for \#39569 and \#39607 by exhibiting explicit equations and by hand-checking the Gorenstein-ness of the varieties produced.

\subsection{\#39607} \label{872}

The index 1 codimension 4 Fano 3-fold \#872 $X \subset \Proj^7(1,3^2,4,5^2,6,7)$ is candidate to be a double cover of \#39607 of index 2. Its basket is $\mathcal{B}_{X} \coloneqq \{ 5\times \frac{1}{3}(1,1,2), \frac{1}{5}(1,1,4) \}$, and $\frac{1}{5}(1,1,4)$ is its Type II$_2$ centre. 
It comes from the Type II$_2$ unprojection of the hypersurface \#866 $Y_{15} \subset \Proj^4(1,3^2,4,5)$ having basket $\mathcal{B}_{Y} \coloneqq \{ 5\times \frac{1}{3}(1,1,2), \frac{1}{4}(1,1,3) \}$.
Following \cite{Taylor}, $X$ is obtained by unprojecting the divisor $D$ defined as the image of $\Proj^2(1,1,4)_{a,b,c}$ inside $Y$ via the embedding
\begin{align*}
\phi \colon \Proj^2(1,1,4) &\longrightarrow \Proj^4(1,3,3,4,5) \\ \nonumber
\left(a,b,c\right) &\longmapsto \left(a, b^3, b^3, c, b c \right)
\end{align*}

In this way, $D \coloneqq \Imago(\phi)$ is a divisor inside the general hypersurface $Y_{15} \subset \Proj^4(1,3^2,4,5)$ with homogeneous coordinates $x,u,z,y,v$. In addition, $\Imago(\phi)$ can be written as the $3\times 3$ minors of the following $3\times 6$ matrix (\cite[Example 5.2.2]{Taylor})
\begin{equation*}
N = \left(
\begin{array}{c c c c c c}
u & v & 0 & 0 & x^2 z & - y z \\
x^2 & -y & u & v & 0 & 0 \\
0 & 0 & x^2 & -y & u & v
\end{array}
\right) \; .
\end{equation*}

The equation defining $Y_{15}$ is an homogeneous polynomial of degree 15 of the form 
\begin{equation} \label{hypersurface eqn Type II_2}
F =  \sum_{i=1}^{20} A_i N_i
\end{equation}
where $N_i$ is the $i$-th $3\times 3$ minor of $N$, and $A_i$ is a general homogeneous polynomial of degree $15 - \deg(N_i)$. Note that here both $N_{15}$ and $N_{20}$ have already degree 15, so $A_{15}$ and $A_{20}$ are both non-zero constants. Since $x$ appears already as a square in $N$, we need that $x$ only appears with even powers in the polynomials $A_i$; in this way, $Y_{15}$ is invariant under $\gamma$.

For Type II$_2$ unprojections, there will be three new unprojection variables: indeed, the codimension increases by 3. We call them $s_1,s_2,s_3$, and they have weights $5,6,7$ respectively. 
Finding the equations for $X$ consists in finding the linear and quadratic relations between the unprojection variables $s_1,s_2,s_3$ (in the spirit of \cite[Subsection 2.2 and Lemma 2.5]{PapadakisTypeII}).

Consider the polynomial ring $R$ generated by the coordinates of $\Proj^4(1,3^2,4,5)$, the monomials $p_1 \coloneqq -a^2$, $p_2 \coloneqq c$, and the polynomials $A_i$ (cf \cite[Definition of $\mathcal{O}_{amb}$]{PapadakisTypeII}). The equations of $X$ are retrieved by looking at the resolution of $R/\langle F=0 \rangle$, and they consist in sums and products of the coordinates of $\Proj^4(1,3^2,4,5)$, the monomials $p_1 \coloneqq -a^2$, $p_2 \coloneqq c$, and the polynomials $A_i$. Thus, $x$ still retains its property of appearing only with even powers, and $X$ is defined by 9 equations invariant under $\gamma$. 
So, we can quotient $X$ by the action $\gamma$, and obtain $\tilde{X}$ analogously to Section \ref{double covers}. This constructs \#39607.

Note that it is possible to check with a \verb|Macaulay2| routine that the ring extension $R[s_1,s_2,s_3]$ quotient by the nine equations of $X$ is Gorenstein as follows. First insert the input data: field, polynomial ring \verb|R| (including the general polynomials $A_i$ and the unprojection variables $s_1,s_2,s_3$), matrix \verb|N|. Then, define \verb|J| to be the ideal generated by the $3 \times 3$ minors of $N$. The ideal of the hypersurface $Y_{15}$ is \verb|I|.
\begin{verbatim}
J = minors (3, N)
I = ideal (F= A0*J_(0) + A1*J_(1) + A2*J_(2) + A3*J_(3) + A4*J_(4) + A5*J_(5) 
+ A6*J_(6) + A7*J_(7) + A8*J_(8) + A9*J_(9) + A10*J_(10) + A11*J_(11) 
+ A12*J_(12) + A13*J_(13) + J_(14) + A15*J_(15) + A16*J_(16) + A17*J_(17) 
+ A18*J_(18) + J_(19) )
\end{verbatim}
To construct the unprojection ideal \verb|unprI|, we do
\begin{verbatim}
M = Hom (J, R^1/I)
tempI = ideal (matrix {{1,s1,s2,s3 }} * (presentation M)) + ideal(F)
unprI =  tempI : ideal(y)   
\end{verbatim}
To check that \verb|unprI| is Gorenstein, it is enough to use
\begin{verbatim}
betti res unprI
\end{verbatim}
and verify that its output gives a palindromic list of dimensions at the various stages of the resolution (part of the output has been omitted).
\begin{verbatim}
       0 1  2 3 4
total: 1 9 16 9 1
\end{verbatim} 
The routine is analogous for the example below in Subsection \ref{512}. 

We are not aware of any work in the literature regarding the question of whether there are multiple deformation families induced by Type II$_2$ unprojections, and if there are, how many. This procedure constructs at least one.

\subsection{\#39569} \label{512}
We proceed analogously for \#39569. Its double cover candidate is \#512 $X \subset \Proj^7(1,3,5,6,7^2,8,9)$, whose basket is $\mathcal{B}_{X} \coloneqq \{ 3\times \frac{1}{3}(1,1,2), \frac{1}{5}(1,2,3), \frac{1}{7}(1,1,6)  \}$, and $\frac{1}{7}(1,1,6)$ is one of its Type II$_2$ centres. It is induced by the Type II$_2$ unprojection of the hypersurface \#508 $Y_{21} \subset \Proj^4(1,3,5,6,7)$ with basket $\mathcal{B}_{Y} \coloneqq \{ 3\times \frac{1}{3}(1,1,2), \frac{1}{5}(1,2,3), \frac{1}{6}(1,1,5) \}$.
The divisor $D$ is defined as the image of $\Proj^2(1,1,6)_{a,b,c}$ inside $Y$ via the embedding
\begin{align*}
\phi \colon \Proj^2(1,1,6) &\longrightarrow \Proj^4(1,3,5,6,7) \\ \nonumber
\left(a,b,c\right) &\longmapsto \left(a, b^3, b^5, c, b c \right)
\end{align*}

The divisor $D \coloneqq \Imago(\phi)$ is given by the vanishing of the $3\times 3$ minors of the $3\times 6$ matrix 
\begin{equation*}
N = \left(
\begin{array}{c c c c c c}
u & v & -z^2 & 0 & 0 & - y z \\
0 & -y & u & v & -z^2 & 0 \\
-z & 0 & 0 & -y & u & v
\end{array}
\right) \; .
\end{equation*}
It sits inside the general hypersurface $Y_{21} \subset \Proj^4(1,3,5,6,7)$ with homogeneous coordinates $x,z,u,y,v$ respectively. The equation of $Y_{21}$ is again given by \eqref{hypersurface eqn Type II_2}, and we can impose that the general polynomials $A_i$ contain the variable $x$ only with even powers. As before, $A_{15}$ and $A_{20}$ are constants. 
The three new unprojection variables are $s_1,s_2,s_3$ with weights $7,8,9$ respectively. 
The unprojection of $Y$ and the quotient of $X$ by $\gamma$ constructs $\tilde{X}$ \#39569.

\section{Examples} \label{examples and codes}

Here we give two examples of our construction, one of a Tom family and one of a Jerry family. 
We explicitly construct the deformation families relative to the Hilbert series with GRDB ID \#39660. This is $\tilde{X} \subset \Proj^7(2,2,3,5,5,7,12,17)$, whose basket of singularities is $\mathcal{B}_{\tilde{X}} \coloneqq \{ \frac{1}{17}(2,5,12) \}$. 
The codimension 4 Fano 3-fold $X \subset \Proj^7(1,2,3,5,5,7,12,17)$ in index 1 with Hilbert series \#1158 is the candidate to be the double cover for \#39660. The coordinates of the ambient space of $X$ are $x,y,z,u_1,u_2,v,w,s$, and its basket of singularities is $\mathcal{B}_X \coloneqq \{ \frac{1}{17}(1,5,12) \}$. 
In this case, the grading of $M$ is
\begin{equation*}
\wts M = (m_{ij}) = \left(
\begin{array}{c c c c}
2 & 3 & 5 & 7 \\
& 5 & 7 & 9 \\
& & 8 & 10 \\
& & & 12
\end{array}
\right) \; .
\end{equation*}
The family of the codimension 3 Fano 3-fold $Z$ with GRDB ID \#1157 is composed of 3-folds sitting inside $\Proj^6(1,2,3,5,5,7,12)$ and whose equations are the five maximal pfaffians of $M$ with the above grading. 
The divisor $D$ is $\Proj^2(2,5,12)$. The Tom and Jerry formats of $M$ that admit an embedding $D \subset Z$ are Tom$_5$ and Jerry$_{12}$ each with 4 and 6 nodes on $D$ respectively. In addition, the pivot entry of the Jerry$_{12}$ format is $a_{12}$, which has weight 2. 
Therefore, by Lemmas \ref{nodes not fixed}, \ref{from Z to Z2Z invariance} we have two possible deformation families for $\tilde{X}$, one coming from the Tom$_5$ format, the other from the Jerry$_{12}$ format. It remains to exhibit the equations for these two families. These calculations can be checked using the \verb|tj| package for \verb|Magma| that can be found on the Graded Ring Database website \cite{grdb}. Instructions on how to fill the matrix $M$ can be found in \cite[Section 4]{T&Jpart1} and \cite[Section 3.2]{CampoSarkisov}.

\subsection{Finding equations for \#39660 of Tom$_5$ type}
Here we want to build $\tilde{Z}$ produced from $Z$ in Tom$_5$ format. We define the ambient space $\Proj^6(2,2,3,5,5,7,12)$ with coordinates as above, where $x$ has been replaced by $\xi$ of weight 2. 
The divisor $D \cong \Proj^2(2,5,12)$ is defined by the vanishing of the coordinates $y,z,u_2,v$. 
Here the matrix $M$ in Tom$_5$ format is filled following \cite[Section 6.2]{T&Jpart1}.
As a rule of thumb, we place homogeneous coordinates in the entries with matching degrees where possible according to the format. The rest of the entries are occupied by general polynomials in the given degrees, still maintaining the format. The matrix can be tidied up by row/column operations. 
For instance, we can fill the entries of $M$ as follows
\begin{equation*}
M = (a_{ij}) = \left(
\begin{array}{c c c c}
y & z & u_2 & -v \\
& -u_2 & v & - z^3 + \xi^2 u_1 \\
& & \xi^3 y + y^4 & -\xi^5 + u_1^2 \\
& & & w
\end{array}
\right) \; .
\end{equation*}
The equations of the 3-fold $\tilde{Z}$ are the maximal pfaffians of $M$, and it is possible to check with a \verb|Magma| routine that the number of nodes on $D$ is 2, in accordance to Lemma \ref{nodes not fixed}. 
The next step is to perform the Type I unprojection from $D \subset \tilde{Z}$ as in \cite{PapadakisReidKM}. The nine equations of $\tilde{X}$ are 
\begin{align*}
&\xi^3 y^2 + y^5 - u_2^2 - z v = 0 \\
&\xi^5 y - z^4 + \xi^2 z u_1 - y u_1^2 - u_2 v = 0\\
&z^3 u_2 - \xi^2 u_1 u_2 - v^2 + y w = 0 \\
&\xi^5 u_2 - \xi^3 y v - y^4 v - u_1^2 u_2 + z w = 0 \\
&\xi^3 y z^3 + y^4 z^3 - \xi^2 y^4 u_1 - \xi^5 v - z^4 u_1 + \xi^2 z u_1^2 - y u_1^3 +
u_1^2 v - u_1 u_2 v + u_2 w = 0 \\
&\xi^5 z^3 - \xi^7 u_1 - z^3 u_1^2 + \xi^2 u_1^3 - v w + y s = 0 \\
&-\xi^{10} + 2 \xi^5 u_1^2 + \xi^3 v^2 + y^3 v^2 - u_1^4 - z s = 0 \\
&\xi^3 z^3 v + y^3 z^3 v - \xi^5 u_1 v - \xi^2 y^3 u_1 v - \xi^5 w + u_1^2 w + u_2 s = 0 \\
&-\xi^8 y z^2 - \xi^5 y^4 z^2 + \xi^5 z^3 u_1 + \xi^2 y^3 z^3 u_1 - \xi^7 u_1^2 -
\xi^4 y^3 u_1^2 \\ &+ \xi^3 y z^2 u_1^2 + y^4 z^2 u_1^2 + \xi^3 z^2 u_2 v +
y^3 z^2 u_2 v + w^2 - v s = 0 \; .
\end{align*}

\subsection{Finding equations for \#39660 of Jerry$_{12}$ type}
In a similar fashion to the Tom case above, here the matrix $M$ in Jerry$_{12}$ is defined as 
\begin{equation*}
M = (a_{ij}) = \left(
\begin{array}{c c c c}
y & z & u_2 & v \\
& -u_2 & v & \xi^3 z + z^3 \\
& & \xi^4 + y^4 & \xi y^4 - u_1^2 \\
& & & w
\end{array}
\right) \; .
\end{equation*}
The codimension 3 Fano 3-fold $Z$ \#1157 satisfies the conditions required in Theorem \ref{criterion for Jerry}. In particular, $\tilde{Z}$ has 3 nodes on $D$. 
The equations of $\tilde{X}$ of Jerry$_{12}$ type with GRDB ID \#39660 are
\begin{align*}
&\xi^4 y + y^5 - u_2^2 - z v =0 \\
&\xi y^5 - \xi^3 z^2 - z^4 - y u_1^2 - u_2 v =0 \\
&\xi^3 z u_2 + z^3 u_2 - v^2 - y w =0 \\
&\xi y^4 u_2 - \xi^4 v - y^4 v - u_1^2 u_2 - z w = 0 \\
&\xi^7 z + \xi^3 y^4 z + \xi^4 z^3 + y^4 z^3 - \xi y^4 v + u_1^2 v - u_2 w =0 \\
&\xi^7 u_2 + \xi^4 z^2 u_2 - \xi^3 z u_1^2 - z^3 u_1^2 + v w + y s =0 \\
&\xi y^4 u_1^2 - \xi y^3 u_2 v + y^3 v^2 - \xi^4 w - u_1^4 - z s =0 \\
&\xi^{11} + \xi^7 y^4 + \xi^8 z^2 + \xi^4 y^4 z^2 + \xi^3 y^3 z v + y^3 z^3 v 
- \xi y^3 v^2 - u_1^2 w + u_2 s =0 \\
&\xi^7 u_1^2 + \xi^3 y^4 u_1^2 - \xi^4 y^3 u_2^2 + \xi^4 z^2 u_1^2 + 
y^4 z^2 u_1^2 - \xi y^3 z^2 u_2^2 + \xi^3 y^3 u_2 v + y^3 z^2 u_2 v + w^2 - v s =0 \; .
\end{align*}

\bibliography{bibliography}
\bibliographystyle{plain}

\end{document}